\definecolor{mygrey}{gray}{0.70}
\definecolor{mygreen}{rgb}{0,.75,0}
\definecolor{myred}{rgb}{1,0,0}
\definecolor{orange}{rgb}{1,.5,0}
\numberwithin{equation}{section}
\newtheorem{thm}[equation]{Theorem}
\newtheorem{cor}[equation]{Corollary}
\newtheorem{prop}[equation]{Proposition}
\theoremstyle{definition}
\newtheorem{defn}[equation]{Definition}
\theoremstyle{remark}
\newtheorem{rem}[equation]{Remark}
\newcommand{\thmref}[1]{Theorem~\ref{#1}}
\newcommand{\corref}[1]{Corollary~\ref{#1}}
\newcommand{\defref}[1]{Definition~\ref{#1}}
\newcommand{\remref}[1]{Remark~\ref{#1}}
\newcommand{\figref}[1]{Figure~\ref{#1}}
\newcommand{\secref}[1]{Section~{\bf\ref{#1}}}
\renewcommand\b{{\beta}}
\newcommand\D{{\Delta}}
\newcommand\DD{{\boldsymbol\Delta}}
\renewcommand\deg{\operatorname{\mathsf{deg}}}
\newcommand\diag{\operatorname{\mathsf{diag}}}
\newcommand\du{\mathop{\uparrow\!\downarrow}}
\newcommand\E{\mathbf E}
\newcommand\e{{\boldsymbol \varepsilon}}
\newcommand\f{{\varphi}}
\newcommand\ft[1]{{\parbox{130truemm}{#1}}}
\newcommand\G{{\boldsymbol G}}
\newcommand\g{\gamma}
\newcommand\Ga{\Gamma}
\newcommand\GG{{\mathcal G}}
\newcommand\Iso{\operatorname{\mathsf{Iso}}}
\newcommand\K{\mathcal K}
\renewcommand\ker{\operatorname{\mathsf{ker}}}
\newcommand\KK{{\boldsymbol K}}
\renewcommand\l{\lambda}
\newcommand\N{{\mathbb N}}
\renewcommand\ni{{\not\circlearrowright}}
\newcommand\Nu{{\boldsymbol \#}}
\newcommand\oo{{\boldsymbol o}}
\newcommand\ov{\overline}
\renewcommand\P{\mathbf P}
\newcommand\pt{\partial}
\newcommand\Q{\mathbf Q}
\newcommand\R{{\mathcal R}}
\newcommand\RR{{\boldsymbol R}}
\renewcommand\S{{\mathscr S}}
\newcommand\s{\operatorname{\boldsymbol s}}
\newcommand\si{\sigma}
\renewcommand\t{\operatorname{\boldsymbol t}}
\newcommand\T{{\mathcal T}}
\newcommand\Tb{{\mathbb T}}
\renewcommand\th{{\boldsymbol \theta}}
\newcommand\toto{\mathop{\;\longrightarrow\;}}
\newcommand\TT{\boldsymbol T}
\newcommand\wt{\widetilde}
\newcommand\Z{\mathbb Z}
\begin{document}

\title[Random horospheric products]{Stochastic homogenization of
horospheric tree products}

\author[V. A. Kaimanovich]{Vadim A. Kaimanovich}

\address{Mathematics, Jacobs University Bremen, Campus Ring 1, D-28759 Bremen, Germany}

\email{v.kaimanovich@jacobs-university.de, vadim.kaimanovich@gmail.com}

\author{Florian Sobieczky}

\address{Mathematics C, Graz University of Technology, Steyrergasse 30, A-8010 Graz,
Austria \textup{and} Mathematisches Institut, Universit\"at Jena, Ernst Abbe
Platz 2, D-07743 Jena, Germany}

\email{sobieczky@tugraz.at, Florian.Sobieczky@uni-jena.de}


\begin{abstract}
We construct measures invariant with respect to equivalence relations which
are graphed by horospheric products of trees. The construction is based on
using conformal systems of boundary measures on treed equivalence relations.
The existence of such an invariant measure allows us to establish amenability
of horospheric products of random trees.
\end{abstract}

\maketitle

\thispagestyle{empty}

\section*{Introduction}

The study of \emph{graphed measured equivalence relations} has two origins.
The first one is \emph{ergodic}, namely, the orbit equivalence theory for
measure class preserving actions of countable groups. The second one is
\emph{geometric}, because such equivalence relations naturally arise (as
traces of the leaf partition) on transversals of foliated or laminated spaces
endowed with holonomy (quasi-)invariant measures.

The departure point of Feldman and Moore in their famous paper
\cite{Feldman-Moore77} published in 1977 (and announced two years earlier
\cite{Feldman-Moore75}) was entirely ergodic: it was the idea that numerous
properties of measure class preserving group actions can actually be expressed
just in terms of the associated orbit equivalence relation. In their work
Feldman and Moore did not consider any additional leafwise graph structures on
equivalence relations. However, at about the same time Plante \cite{Plante75}
essentially introduced graphed equivalence relations (in terms of finitely
generated holonomy pseudogroups) in the topological context of foliations.

It was only in 1990 that Adams \cite{Adams90} defined the notion of a graphed
equivalence relation in the purely measure-theoretical setup and proved
non-amenability of non-elementary treed equivalence relations with a finite
invariant measure. Later this notion was used by the first author
\cite{Kaimanovich97} in order to clarify the relationship between the
amenability of an equivalence relation and the amenability of its leafwise
graphs and to give a new geometrical proof of the Connes--Feldman--Weiss
theorem on the equivalence of hyperfiniteness and amenability. A new insight
was brought in by Gaboriau \cite{Gaboriau00} by introducing the \emph{cost} of
an equivalence relation with an invariant probability measure as the lowest
possible average value of the degree of vertices in leafwise connected graph
structures. This invariant turned out to be very useful and has found numerous
applications (e.g., see the recent survey \cite{Furman09}).

\medskip

From the probabilistic point of view a discrete equivalence relation with a
quasi-invariant measure naturally arises from any stationary Markov chain with
discrete transition probabilities (see \cite{Kaimanovich98} for an explicit
formula for the Radon--Nikodym cocycle). Finitely supported transition
probabilities then produce a locally finite graph structure on this
equivalence relation. In particular, there is a canonical one-to-one
correspondence between \emph{detailed balance} stationary measures of the
leafwise simple random walk on the state space of a graphed equivalence
relation (i.e., the ones with respect to which this random walk is reversible)
and invariant measures of the equivalence relation. Namely, the density of a
stationary measure with respect to the corresponding invariant measure is just
the vertex degree function $\deg$.

Another link with the probability theory is provided by the fact that a
graphed equivalence relation on a probability space naturally gives rise to a
map from this space to the space of rooted graphs $\GG$ (i.e., a \emph{random
rooted graph}). It assigns to any point from the state space its leafwise
graph with this very point as the distinguished vertex. \emph{Stochastic
homogenization} \cite{Kaimanovich03a} of a certain family of infinite graphs
consists in finding a probability measure invariant with respect to an
equivalence relation whose classes are endowed with graph structures from this
family. The role of such a measure is then similar to the role of an invariant
measure for a usual dynamical system.

The space of rooted graphs itself has an intrinsic graph structure: two rooted
graphs are neighbours if they are isomorphic as unrooted graphs and their
roots are neighbours in this common graph $\Ga$. Moreover, if $\Ga$ is
\emph{rigid}, i.e., its isometry group is trivial, then the graph on its
equivalence class (which is obtained by varying the root position) is
precisely $\Ga$ itself. Denote by $\GG_\ni\subset\GG$ the space of rooted
rigid graphs. Then for rigid graphs the problem of stochastic homogenization
reduces to finding an invariant probability measure on the corresponding
subspace of $\GG_\ni$.

\medskip

It is easy to construct invariant measures on $\GG_\ni$ by random
perturbations of Cayley graphs of finitely generated infinite groups. However,
there are random graphs whose origin has nothing to do with groups. The first
example of stochastic homogenization in such situations is the invariant
measure on the space of rigid rooted trees obtained from \emph{augmented
branching processes}. When studying random walks on Galton--Watson trees
Lyons, Pemantle and Peres noticed that it is natural to modify the branching
process by letting the progenitor to have one more additional offspring (so
that all vertices statistically have the same number of neighbours). Then the
arising probability measure on Galton--Watson trees augmented in this way is
stationary with respect to the leafwise simple random walk
\cite{Lyons-Pemantle-Peres95}. Thus, by \cite{Kaimanovich98}, dividing this
measure by the degree function $\deg$ produces an invariant measure (in
\thmref{th:inv} we also give a simple direct proof of this fact).

\medskip

The main purpose of the present paper is to obtain a stochastic homogenization
for yet another family of graphs: \emph{horospheric products of trees.}

\medskip

These graphs were first introduced by Diestel and Leader
\cite{Diestel-Leader01} in an attempt to answer a question of Woess
\cite{Woess91} on existence of vertex-transitive graph not quasi-isometric to
Cayley graphs. Although the fact that the Diestel--Leader graphs indeed
provide such an example was only recently proved by Eskin, Fisher and Whyte
\cite{Eskin-Fisher-Whyte07}, in the meantime the construction of Diestel and
Leader attracted a lot of attention because of its numerous interesting
features (see \cite{Woess05,Bartholdi-Neuhauser-Woess08} and the references
therein).

The starting point of this construction is the fact that, given an infinite
tree $T$, any boundary point $\g\in\pt T$ determines the associated
$\Z$-valued additive \emph{Busemann cocycle} $\b_\g$ on $T$: for any two
vertices $x,y\in T$ the value $\b_\g(x,y)$ is, informally speaking, the
``difference between the distances'' from the points $x$ and $y$ to the point
at infinity $\g$ (this cocycle is actually well-defined for any
$\mathsf{CAT}(0)$ space). The level sets of the Busemann cocycle $\b_\g$
consist of the points in $T$ which are \emph{equidistanced} from $\g$ and are
called \emph{horospheres} (or \emph{horocycles} in the case of the classical
hyperbolic plane, whence the frequently used alternative term ``horocyclic
products'').

The horospheric product of two pointed at infinity rooted trees $(T,o,\g)$ and
$(T',o',\g')$ is then defined in the following way. Take the graph-theoretical
product of the trees $T,T'$, and consider its subgraph $\Ga$ which has the
same vertex set $T\times T'$, but contains only those edges of the product
graph which are in the kernel of the cocycle $\b_\g+\b_{\g'}$. The horospheric
product of the trees $(T,o,\g)$ and $(T',o',\g')$ is then the connected
component of the graph $\Ga$ which contains the product origin $(o,o')$.

Geometrically one can think about the horospheric products in the following
way \cite{Kaimanovich-Woess02}. Draw the tree $T'$ upside down next to $T$ so
that the respective horospheres are at the same levels. Connect the two roots
$o,o'$ with an elastic spring. It can move along each of the two trees, may
expand infinitely, but must always remain in horizontal position. The vertex
set of the horospheric product consists then of all admissible positions of
the spring. From a position $(x,x')$ with $\b_\g(o,x)+\b'_{\g'}(o',x')=0$ the
spring may move downwards to one of the ``sons'' of $x$ and at the same time
to the ``father'' of $x'$, or upwards in an analogous way. Such a move
corresponds to going to a neighbour $(y,y')$ of $(x,x')$.

\medskip

It is natural to look for a stochastic homogenization of horospheric products
based on stochastic homogenizations of trees, i.e., on treed equivalence
relations with an invariant probability measure. However, such equivalence
relations are non-amenable (unless elementary) \cite{Adams90}, and therefore
there is no measurable way of assigning a single boundary point $\g\in\pt T_x$
to any point $x$ from the base space of the equivalence relation
\cite{Kaimanovich04} (here $T_x$ is the leafwise tree on the equivalence class
of $x$). Thus, a stochastic homogenization of horospheric products should be
preceded by a choice of an appropriate measurable \emph{system of boundary
measures} $\{\nu_x\}$ on $\pt T_x$. By analogy with Fuchsian and Kleinian
groups (e.g., see \cite{Patterson76,Sullivan79}) we say that a system
$\{\nu_x\}$ is \emph{conformal} if it is quasi-invariant and its
Radon--Nikodym derivatives satisfy the relation $d\nu_y/d\nu_x(\g) =
\exp(-\l\b_\g(x,y))$ for a certain $\l>0$ (the \emph{dimension} of the
system).

Given a treed equivalence relation $R$ on a space $X$, we define its
\emph{boundary bundle} as the set $\wt X=\{(x,\g):x\in X,\g\in\pt T_x\}$ (this
is an immediate analogue of the unit tangent bundle on negatively curved
manifolds). If two points $x,y\in X$ are equivalent, then $T_x$ and $T_y$
coincide as unrooted trees, so that there is a natural identification of the
boundaries $\pt T_x$ and $\pt T_y$. Therefore, $\wt X$ also has a structure of
a treed equivalence relation
$$
\wt R=\{(x,y,\g): (x,y)\in R,\g\in\pt T_x\cong\pt T_y\} \;,
$$
and $\wt R$ carries the $\Z$-valued additive cocycle $\wt\b=\b_\g(x,y)$. If
$\mu$ is an $R$-invariant measure on $X$, and $\{\nu_x\}$ is a conformal
system of boundary measures of dimension $\l$, then the result of the
integration of the system $\{\nu_x\}$ against the measure $\mu$ is the measure
$\wt\mu$ on $\wt X$ such that its Radon--Nikodym cocycle with respect to the
equivalence relation $\wt R$ is precisely $\exp(-\l\wt\b)$. Thus, the measure
$\wt\mu$ is, in our context, a solution of a classical problem of ergodic
theory: to find a measure with the prescribed Radon--Nikodym derivatives.

Now, let $\{\nu'_{x'}\}$ be a conformal system of boundary measures of the
same dimension $\l$ on another treed equivalence relation with invariant
measure $(X',\mu',R')$, and let $\wt\mu'$ be the associated measure on the
boundary bundle $\wt X'$. We shall say that the kernel $\RR$ of the cocycle
$c=\wt\b+\wt\b'$ is the \emph{horospheric product} of the treed equivalence
relations $R$ and $R'$. The equivalence relation $\RR$ is endowed with a
natural graph structure such that its equivalence classes are precisely the
horospheric products of trees from equivalence relations $R$ and $R'$.
Moreover, the product measure $\wt\mu\times\wt\mu'$ is $\RR$-invariant, thus
providing the sought for stochastic homogenization of horospheric products
(\thmref{th:hp}). It follows from the fact that the logarithms of the
Radon--Nikodym cocycles of the measures $\wt\mu,\wt\mu'$ are proportional to
the respective Busemann cocycles $\wt\b,\wt\b'$ with the same proportionality
coefficient $-\l$, so that the product measure is invariant with respect to
the kernel of $\wt\b+\wt\b'$. Note that this construction is very similar to
the construction of an invariant measure of the geodesic flow on a negatively
curved manifold from a conformal measure \cite{Kaimanovich90}.

\medskip

As an application we show in \thmref{th:am} that the horospheric product of
almost any pair of pointed at infinity rooted trees arising in the above
situation is amenable, (i.e., does not satisfy the strong isoperimetric
inequality: there are subsets whose boundary is arbitrarily small compared
with the subset itself). The proof is based on the fact that the equivalence
relations $(\wt X,\wt\mu,\wt R)$ and $(\wt X',\wt\mu',\wt R')$ are both
amenable (as they are graphed by trees pointed at infinity). Therefore their
product and its subrelation $\RR$ are also amenable. On the other hand, since
$\RR$ has a finite invariant measure, amenability of $\RR$ implies amenability
of its leafwise graphs.

Another application is the existence of the associated finite stationary
measure of the leafwise simple random walk and the ensuing possibility for a
study of the asymptotical properties of simple random walks on individual
horospheric products (the linear rate of escape, the harmonic measure, the
Poisson boundary, the asymptotic entropy etc.). We shall return to this
subject in another publication.

\medskip

As an example in \secref{sec:3} we consider the horospheric products of
augmented Galton--Watson trees. It is easy to see that the \emph{branching
measures} on their boundaries (i.e., the limits of the appropriately
normalized uniform measures on the spheres around the root) for a conformal
system of dimension $\l=\log m$, where $m$ is the mean of the offspring
distribution. Thus, horospheric products of augmented Galton--Watson trees
corresponding to any two branching processes with the same mean are
stochastically homogeneous.

\medskip

There are numerous natural questions which arise in connection with our study.
We hope to address them in the future, and here we shall just briefly mention
some of them.

\medskip

(1) In the present paper we do not consider at all the question about the
ergodicity of arising measures. In fact one can show that in our setup the
horospheric product of ergodic boundary bundles is also ergodic (the proof is
based on an analogue of the famous \emph{Hopf argument} used for proving
ergodicity of the geodesic flow on negatively curved manifolds
\cite{Kaimanovich90}).

(2) Currently the augmented Galton--Watson measures (and similar measures
arising from more general branching processes) are the only examples of
``nice'' invariant measures on the space of rooted trees. Although it was
recently proved that any invariant measure can be obtained as an appropriate
weak limit \cite{Elek08}, it would still be interesting to have other explicit
examples.

(3) Which treed equivalence relations with an invariant measure admit a
conformal system of boundary measures? When is such a system unique? In fact,
conformal systems are closely related with the Hausdorff boundary measures (in
perfect analogy with the Fuchsian and Kleinian case \cite{Sullivan79}). One
can show that under natural assumptions there is at most one conformal system
of boundary measures which coincides with the system of the Hausdorff
measures.

(4) Our point of view on boundary measures on random trees consists in
considering \emph{systems} of boundary measures corresponding to varying roots
rather than a \emph{single} measure (once again, in perfect analogy with the
theory of boundary measures on negatively curved manifolds). In addition to
the Busemann cocycle one can consider other natural cocycles (or potentials)
on the boundary bundle and ask for existence of boundary systems with
prescribed Radon--Nikodym derivatives, which leads to the notion of a
\emph{Gibbs system of boundary measures}. This notion, in particular, provides
a unified approach to a number of results on multifractal properties of
various boundary measures on Galton--Watson trees
\cite{Morters-Shieh04,Kinnison08}. It is also interesting to look at the
ergodic properties of the arising invariant measures of the leafwise geodesic
flow.

(5) It is still unknown whether a.e. leafwise graph in a graphed equivalence
relation with a finite invariant measure has a precise exponential rate of
growth \cite{Hurder-Katok87}. As far as we know, this issue is completely open
even for treed equivalence relations. A refinement of this question is the
following problem: for which treed equivalence relations do the normalized
uniform measures on spheres converge (like for the Galton--Watson and other
trees arising from branching processes)?

\medskip

The first author would like to thank the organizers of the First Seasonal
Institute of the Mathematical Society of Japan ``Probabilistic Approach to
Geometry'' for their warm hospitality and excellent working conditions. The
work of the second author was supported by the Austrian science fund (FWF)
under the project number P18703.

\section{Graphed equivalence relations} \label{sec:1}

\subsection{Equivalence relations} \label{sec:equiv}

We shall first remind the basics from the theory of \emph{discrete measured
equivalence relations} created by Feldman and Moore \cite{Feldman-Moore77}.
Their starting point was the observation that many properties of a measure
class preserving action of a countable group can actually be expressed just in
terms of the corresponding orbit equivalence relation. We shall partially use
the \emph{groupoid} approach, see \cite{Renault80,Anantharaman-Renault00}.

For an arbitrary equivalence relation $R\subset X\times X$ on a \emph{state
space} $X$ the \emph{composition}
\begin{equation} \label{eq:comp}
(x,y)(y,z)=(x,z) \qquad \text{for}\quad (x,y),(y,z) \in R
\end{equation}
determines a \emph{groupoid structure} $\G=\G(R)$ with
\begin{itemize}
\item
the \emph{set of objects}~$X$,
\item
the \emph{set of morphisms} $R$,
\item
the \emph{source map} $\s:(x,y)\mapsto x$,
\item
the \emph{target map} $\t:(x,y)\mapsto y$,
\item
the \emph{identity embedding} $\e:x\mapsto(x,x)$,
\item
the \emph{involution} $\th:(x,y)\mapsto(x,y)^{-1}=(y,x)$.
\end{itemize}
Denote by
$$
[x]=[x]_R=R(x)
$$
the \emph{$R$-equivalence class} of a point $x\in X$. In other terminologies
(which come from two important sources of equivalence relations: foliations
and group actions) one also calls equivalences classes \emph{leafs} or
\emph{orbits}.

An equivalence relation $R$ on $X$ is called \emph{discrete measured} if
\begin{itemize}
   \item[(i)]
It is \emph{countable}, i.e., the classes $[x]$ are at most countable;
    \item[(ii)]
Its state space $X$ is endowed with a structure of a \emph{standard Borel
space}, and it carries a $\si$-finite Borel measure $\mu$, so that $(X,\mu)$
is a \emph{Lebesgue measure space} (i.e., its non-atomic part is isomorphic to
an interval with the Lebesgue measure on it);
   \item[(iii)]
It is measurable as a subset of $X\times X$ (endowed with the product Borel
structure);
    \item[(iv)]
It preserves the class of the measure $\mu$ ($\equiv$ the measure $\mu$ is
\emph{quasi-invariant} with respect to $R$), which means that for any subset
$A\subset X$ with $\mu(A)=0$ its \emph{saturation}
$$
[A]=\bigcup_{x\in A}[x]
$$
also has measure 0.
\end{itemize}

Below all the equivalence relations are assumed to be discrete measured with
\emph{infinite equivalence classes}. All the properties related to measure
spaces will be understood \emph{mod~0}, i.e., up to measure 0 subsets.

Any discrete measured equivalence relation can be presented as the orbit
equivalence relation of a measure class preserving action of a certain
countable group (although there are equivalence relations for which such an
action can not be free \cite{Furman99}). However, there are equivalence
relations whose origin \emph{a priory} has nothing to do with group actions
(for instance, \emph{treed equivalence relations} which we shall study below).

\subsection{The Radon--Nikodym cocycle} \label{sec:RN}

The fibers of the source map $\s$ satisfy the \emph{transitivity relation}:
$$
(x,y)\s^{-1}(y) =\s^{-1}(x) \qquad\forall\,(x,y)\in R
$$
(the multiplication in the left-hand side is the groupoid composition
\eqref{eq:comp}). Denote by $\Nu_x$ the \emph{counting measure} on the fiber
$\s^{-1}(x)$  of the source map (this fiber is in obvious one-to-one
correspondence $(x,y)\mapsto y$ with the class $[x]$). The system of measures
$\{\Nu_x\}_{x\in X}$ is then \emph{left invariant} in the sense that
$$
(x,y)\Nu_y=\Nu_x \qquad\forall\,(x,y)\in R \;,
$$
so that it is a \emph{source} (or \emph{left}) \emph{Haar system} for the
groupoid $\G$. The result of the integration of the fiber measures $\Nu_x$
against the measure $\mu$ on the state space $X$ is the $\si$-finite
measure~$\mu_\#$ defined as
$$
d\mu_\#(x,y) = d\mu(x) d\Nu_x(y) = d\mu (x) \;,
$$
which is called the \emph{left counting measure} on $R$.

In the same way, denote by $\Nu^x$ the counting measure on the fiber
$\t^{-1}(x)$ of the target map. The system $\{\Nu^x\}$ is \emph{right
invariant} in the sense that
$$
\Nu^x(x,y)=\Nu^y \qquad\forall\,(x,y)\in R \;,
$$
so that it is a \emph{target} (or \emph{right}) \emph{Haar system} for the
groupoid $\G$. The result of the integration of the fiber measures $\Nu^x$
against the measure $\mu$ on the state space $X$
$$
d\mu^\#(x,y) = d\mu(y) d\Nu^y(x) = d\mu (y)
$$
is called the \emph{right counting measure} on $R$. Alternatively, the right
counting measure~$\mu^\#$ can be obtained from the left counting measure
$\mu_\#$ (and \emph{vice versa}) by applying the involution $\th$, so that
$\mu^\#=\th\mu_\#$ and $\mu_\#=\th\mu^\#$.

It turns out that the measures $\mu_\#$ and $\mu^\#$ are equivalent if and
only if the original measure $\mu$ is quasi-invariant with respect to $R$. In
this case the Radon--Nikodym derivative
$$
\D(x,y) = \frac{d\mu^\#}{d\mu_\#} (x,y)
$$
is called the \emph{Radon--Nikodym cocycle} of the measure $\mu$ with respect
to $R$ (it is a \emph{multiplicative cocycle} in the sense that
$$
\D(x,y) \D(y,z) = \D(x,z)
$$
for any triple of equivalent points $x,y,z\in X$). If $\D\equiv 1$, then the
measure $\mu$ is \emph{$R$-invariant}, or, respectively, the equivalence
relation $R$ \emph{preserves} the measure $\mu$.

Equivalently, the measure $\mu$ is quasi-invariant with respect to $R$ if and
only if for any \emph{partial transformation} $\f$ of $R$ (i.e., a measurable
bijection between two measurable subsets $A,B\subset X$ whose graph is
contained in $R$) the $\f$-image $\f(\mu|_A)$ of the restriction of $\mu$ to
$A$ is absolutely continuous with respect to the restriction $\mu|_B$ of $\mu$
to $B$, and
$$
\D(x,y) = \frac{d\f^{-1}\mu}{d\mu} (x) = \frac{d\mu}{d\f\mu}(y) \;.
$$
Thus, the Radon--Nikodym cocycle can also be considered as the ``ratio of
differentials''
$$
\D(x,y) = \frac{d\mu(y)}{d\mu(x)} \;, \qquad (x,y)\in R \;.
$$
This formalism is quite convenient and can always be made rigorous by passing
to the appropriate partial transformations.

If $R=R_G$ is the orbit equivalence relation determined by a measure class
preserving action of a countable group $G$ on a measure space $(X,\mu)$, then
$$
\D(x,gx)=\frac{dg^{-1}\mu}{d\mu}(x) \;.
$$

\subsection{Graph structures} \label{sec:str}

Recall that a \emph{graph} $\Ga$ is determined by its \emph{set of vertices}
(usually it is denoted in the same way as the graph itself) and its \emph{set
of edges}. We shall always deal with \emph{non-oriented graphs without loops
and multiple edges}, so that the set of edges can be identified with a
symmetric subset of $\Ga\times\Ga\setminus\diag$.

Analogously, a (non-oriented) \emph{graph structure} on a discrete measured
equivalence relation $(X,\mu,R)$ is determined by a measurable symmetric
subset $K\subset R\setminus\diag$. The result of the restriction of this graph
structure to an equivalence class $[x]$ gives the \emph{leafwise graph}
denoted by~$[x]^K$. We shall call $(X,\mu,R,K)$ a \emph{graphed equivalence
relation} \cite{Adams90}. Actually, in a somewhat less explicit form (in terms
of finitely generated pseudogroups) this definition is already present in
\cite{Plante75}, \cite{Series79} and \cite{Carriere-Ghys85}.

We shall always deal with the graph structures which are \emph{locally
finite}, i.e., any vertex has only finitely many neighbours, and denote by
$\deg$ the integer valued function which assigns to any point $x\in X$ the
degree (valency) of $x$ in the graph $[x]^K$. Passing, if necessary, to a
smaller equivalence relation, we may always assume that the graph structure is
\emph{leafwise connected}, i.e., a.e.\ leafwise graph $[x]^K$ is connected.
The latter condition means that
$$
R = \bigcup_{n\ge 1} K^n \;,
$$
with respect to the groupoid multiplication \eqref{eq:comp}.

The simplest example of a locally finite leafwise connected graph structure
arises in the situation when $R=R_G$ is the orbit equivalence relation of an
action of a finitely generated countable group $G$. For a symmetric generating
set $S$ put
$$
K = \{(x,y)\in R: y=sx \; \text{for a certain}\; s\in S\} \;.
$$
Then the leafwise graphs $[x]^K$ are isomorphic either to the (left)
\emph{Cayley graph} $(G,S)$ (if the orbit $Gx$ is free), or to the
\emph{Schreier graphs} determined by subgroups of $G$ (if the orbit $Gx$ is
not free). Once again, although any measured equivalence relation can be
generated by a group action, there is a lot of graph structures (for instance,
treed equivalence relations considered below) which can not be obtained in
this way (cf. the comment at the end of \secref{sec:equiv}).

If the measure $\mu$ is $R$-invariant and finite, then the leafwise graphs
$[x]^K$ have properties which make them similar to Cayley graphs of finitely
generated groups. In particular, under this condition $\deg\cdot\mu$ (the
measure $\mu$ multiplied by the density $\deg$) is a stationary measure of the
leafwise simple random walk along the classes of the graphed equivalence
relation $(X,\mu,R,K)$ \cite{Kaimanovich98}. Yet another property is related
to \emph{amenability} of the involved structures.

\subsection{Amenability of groups, graphs and equivalence relations}
\label{sec:amen}

There is a lot of definitions and applications of amenability, which
illustrates importance and naturalness of this notion. Here we shall just
briefly outline the properties which are used later on in this paper (see
\cite{Greenleaf69}, \cite{Anantharaman-Renault00} for the missing references
and for further details).

Let us first remind that the class of \emph{amenable groups} is, from the
analytical point of view, the most natural extension of the class of finite
groups. Indeed, finite groups can be characterized within the class of all (at
most) countable groups by existence of finite invariant measures. There are
two ways of ``extending'' the finiteness property to infinite groups. One can
look either for fixed points in a bigger space, or for approximative
invariance instead of precise one.

Von Neumann implemented the first idea and defined \emph{amenable groups} as
those which admit a translation invariant \emph{mean}, i.e, a finitely
additive probability measure (actually, the term ``amenable'' was introduced
much later by Day). Means being highly non-constructive objects, the other
option was explored by Reiter who introduced the following condition on a
countable group $G$: there exists an \emph{approximatively invariant sequence}
of probability measures $\theta_n$ on $G$, i.e., such that
$$
\|\theta_n - g \theta_n\| \toto_{n\to\infty} 0 \qquad\forall\,g\in G \;,
$$
where $\|\cdot\|$ denotes the total variation norm. Reiter proved that the
above condition (nowadays known as \emph{Reiter's condition}) is in fact
equivalent to amenability as defined by von Neumann.

By specializing Reiter's condition to sequences of probability measures
equidistributed on finite subsets of $G$ one obtains \emph{F{\o}lner's
condition}: there exists a sequence of finite subsets $A_n\subset G$ such that
$$
\frac{|gA_n \triangle A_n|}{|A_n|} \toto_{n\to\infty} 0 \qquad\forall\,g\in G
\;,
$$
where $\triangle$ denotes the symmetric difference of two sets, and $|A|$ is
the cardinality of a finite set $A$. This condition is also equivalent to
amenability of the group $G$.

For finitely generated groups the above approximative invariance condition on
a sequence of subsets $A_n\subset G$ takes especially simple form:
\begin{equation} \label{eq:iso}
\frac{|\pt A_n|}{|A_n|} \toto_{n\to\infty} 0 \;,
\end{equation}
where $\pt A$ denotes the \emph{boundary} of a set $A\subset G$ in the left
Cayley graph determined by a finite symmetric generating set (i.e., $\pt A$ is
the set of all points from $A$ which have a neighbour from the complement of
$A$). This is an \emph{isoperimetric} characterization of amenability. Its
formulation does not require any group structure, and therefore it can be
applied to arbitrary graphs. The graphs of \emph{bounded geometry} (i.e., with
uniformly bounded vertex degrees) which satisfy the above isoperimetric
property are called \emph{amenable}. In spectral terms amenable graphs are
characterized as the graphs for which the spectral radius of the Markov
operator of the simple random walk is 1, which is a generalization of Kesten's
description of amenable groups.

In a different direction the notion of amenability has been extended to group
actions, equivalence relations, and, more generally, to groupoids. Zimmer was
the first to notice that non-amenable groups may have actions which are
similar to actions of amenable groups. His original (rather heavy) definition
of amenable actions in terms of a fixed point property for Banach bundles was
almost immediately reformulated by Renault by using a modification of Reiter's
condition (although the work of Renault remained virtually unknown for quite a
while, cf. \cite{Kaimanovich97}). In particular, for discrete measured
equivalence relations this definition takes the following form: an equivalence
relation $(X,\mu,R)$ is \emph{amenable} if there exists a sequence of
measurable maps assigning to any point $x\in X$ a probability measure
$\theta_n^x$ on the equivalence class of $x$ such that
\begin{equation} \label{eq:am}
\|\theta_n^x-\theta_n^y\|\to 0 \qquad\text{for}\;\mu_\#\text{-a.e.}\; (x,y)\in
R \;.
\end{equation}

Thus, for a graphed equivalence relation $(X,\mu,R,K)$ there are two notions
of amenability. The ``global'' amenability is the amenability of the
equivalence relation $(X,\mu,R)$ in the sense of \eqref{eq:am} and does not
depend on the graph structure $K$, whereas the ``local'' or ``leafwise''
amenability means that $\mu$-a.e. graph $[x]^K$ is amenable in the sense of
\eqref{eq:iso}. In general these conditions are not equivalent (see
\cite{Kaimanovich97} for a complete description of their relationship).
However, for the purposes of the present paper we only need the following
implication: if the equivalence relation $(X,\mu,R)$ is amenable, the measure
$\mu$ is finite invariant, and the degrees of leafwise graphs of the structure
$K$ are uniformly bounded, then $\mu$-a.e.\ graph $[x]^K$ is also amenable
\cite{Carriere-Ghys85}.

\subsection{Random graphs and stochastic homogenization} \label{sec:gr}

A \emph{rooted} ($\equiv$ \emph{pointed}) graph $(\Ga,o)=\Ga_o$ is a graph
$\Ga$ endowed with a reference vertex $o\in\Ga$. Two rooted graphs $\Ga_o$ and
$\Ga'_{o'}$ are isomorphic ($\equiv$ isometric) if there is an isomorphism
($\equiv$ isometry with respect to the graph metric) $\f:\Ga\to\Ga'$ such that
$\f(o)=o'$. A graph $\Ga$ is \emph{rigid} if its isometry group $\Iso(\Ga)$ is
trivial; we shall also say that a rooted graph $\Ga_o$ is rigid if its
underlying graph $\Ga$ is rigid.

We shall denote by $\GG$ the space of (isometry classes of) infinite locally
finite connected rooted graphs, and by $\GG_\ni$ the subspace of $\GG$ which
consists of rigid rooted graphs. The space $\GG$ can be given a complete
separable metric by putting
$$
d(\Ga_o,\Ga'_{o'})=2^{-r} \;,
$$
where $r\ge 0$ is the maximal integer such that the $r$-balls centered at the
roots $o,o'$ of the graphs $\Ga,\Ga'$, respectively, are isometric as finite
rooted graphs. Thus, $\GG$ is a \emph{Polish space}, and therefore its Borel
structure is standard.

Given a graphed equivalence relation $(X,\mu,R,K)$, any point $x\in X$
determines the graph $[x]^K$. Let us denote by $[x]^K_\bullet=([x]^K,x)$ the
graph $[x]^K$ rooted at the point $x$. Thus, we have the map
$$
X \to \GG \;, \quad x \mapsto [x]^K_\bullet \;.
$$
In particular, if $\mu$ is a probability measure, then its image under the
above map is a probability measure on the space of rooted graphs $\GG$, i.e.,
a \emph{random rooted graph}.

Conversely, the space $\GG$ is endowed with a natural equivalence relation
$\R$: two rooted graphs $\Ga_o$ and $\Ga'_{o'}$ are equivalent if the
underlying graphs $\Ga$ and $\Ga'$ are isomorphic. It gives rise to a natural
graph structure $\K$ on $\R$ \cite{Kaimanovich98}:
$$
\K = \bigl\{ \bigl( \Ga_o, \Ga_{o'} \bigr): o\;\text{and}\;o'\;\text{are
neighbours in}\;\Ga \bigr\} \;.
$$
If the group of isometries of $\Ga$ is non-trivial, then the graph
$[\Ga_o]^\K$ is the quotient of the graph $\Ga$ with respect to the action of
the isometry group (in particular, it may contain loops). However, if
$\Iso(\Ga)$ is trivial, then $[\Ga_o]^\K$ is isomorphic to $\Ga$. Thus, the
restriction of the equivalence relation $\R$ to $\GG_\ni$ (which we shall also
denote by $\R$) has the following property:
$$
\ft{\textsf{the graph structure of the equivalence class of any rooted graph
$\Ga_o\in\GG_\ni$ is isomorphic to $\Ga$ itself.}}
$$

\begin{defn}[\cite{Kaimanovich03a}]
The random rooted graph determined by a probability measure~$\mu$ on the space
$\GG_\ni$ is \emph{stochastically homogeneous} if the measure $\mu$ is
invariant with respect to the equivalence relation $\R$.
\end{defn}

Below we shall give examples of stochastic homogenization of trees and their
horospheric products.

\section{Horospheric products of trees} \label{sec:2}

\subsection{Trees}

Recall that a \emph{tree} is a connected graph without cycles. Any two
vertices $x,y$ in a tree $T$ can be joined with a unique \emph{geodesic
segment} $[x,y]$. Any locally finite tree~$T$ has a natural
\emph{compactification} $\ov T = T \sqcup \pt T$ obtained in the following
way: a sequence of vertices $x_n$ which goes to infinity in $T$ converges in
this compactification if and only if for a certain ($\equiv$ any) reference
point $o\in T$ the geodesic segments $[o,x_n]$ converge pointwise. Thus, for
any reference point $o\in T$ the \emph{boundary} $\pt T$ can be identified
with the space of geodesic rays issued from $o$ (and endowed with the topology
of pointwise convergence). There are many other equivalent descriptions of the
boundary $\pt T$ (and of the compactification~$\ov T$), in particular, as the
\emph{space of ends} of $T$ and as the \emph{hyperbolic boundary} of $T$.

A tree $T$ with a distinguished boundary point $\g\in\pt T$ is called
\emph{pointed at infinity} ($\equiv$ remotely rooted; in the terminology of
Cartier \cite{Cartier72} the point $\g$ is called a ``mythological
progenitor''). A triple $T_o^\g=(T,o,\g)$ with $o\in T$ and $\g\in\pt T$ is a
\emph{rooted tree pointed at infinity}.

Any two geodesic rays converging to the same boundary point eventually
coincide, so that any boundary point $\g\in\pt T$ determines the associated
additive $\Z$-valued \emph{Busemann cocycle} on $T\times T$. It is defined as
\begin{equation} \label{eq:Bu}
\b_\g(x,y) = d(y,o) - d(x,o) \;,
\end{equation}
where $d$ is the graph distance on $T$, and $o$ is the \emph{confluence} of
the geodesic rays $[x,\g)$ and $[y,\g)$, see \figref{fig:bus}.

\begin{figure}[h]
\begin{center}
     \psfrag{x}[cl][cl]{$x$}
     \psfrag{o}[cl][cl]{$o$}
     \psfrag{y}[cl][cl]{$y$}
     \psfrag{g}[cl][cl]{$\g$}
     \psfrag{d}[cl][cl]{$\pt T$}
          \includegraphics[scale=.6]{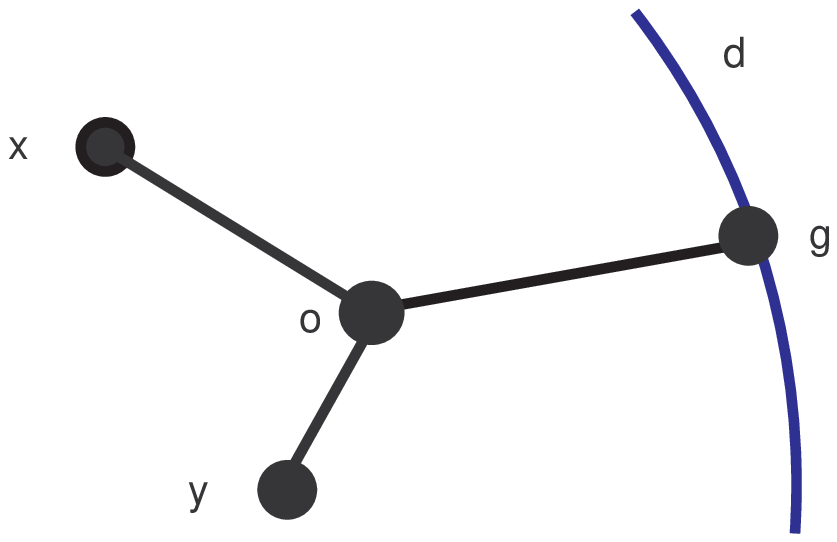}
          \end{center}
          \caption{}
          \label{fig:bus}
\end{figure}

The Busemann cocycle can also be defined as
$$
\b_\g(x,y) = \lim_{z\to\g} \bigl[ d(y,z) - d(x,z) \bigr] \;,
$$
so that it is a ``regularization'' of the formal expression $d(y,\g)-d(x,\g)$.
In the presence of a reference point $o\in T$ one can also talk about the
\emph{Busemann function}
$$
b_\g(x) = \b_\g(o,x) \;.
$$
The level sets
$$
H_k = \{x\in T : b_\g(x) = k \}
$$
of the Busemann function ($\equiv$ of the Busemann cocycle) are called
\emph{horospheres} centered at the boundary point $\g$, see \figref{fig:tree}.

\begin{figure}[h]
\begin{center}
     \psfrag{hm}[cl][cl]{$H_{-1}$}
     \psfrag{o}[cl][cl]{$o$}
     \psfrag{h}[cl][cl]{$H_0$}
     \psfrag{g}[cl][cl]{$\g$}
     \psfrag{hp}[cl][cl]{$H_1$}
          \includegraphics[scale=.6]{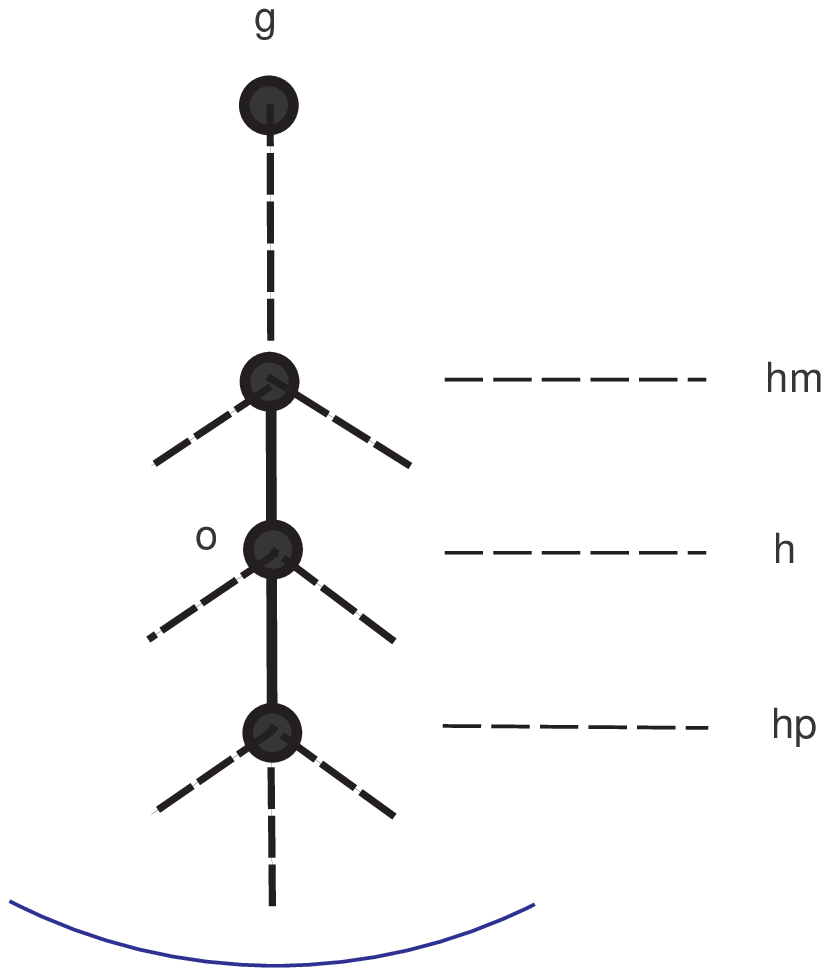}
          \end{center}
          \caption{}
          \label{fig:tree}
\end{figure}

\subsection{Horospheric products}

\begin{defn} \label{def:DL}
Let $T=(T,o,\g)$ and $T'=(T',o',\g')$ be two rooted trees pointed at infinity,
and let $b=\b_{\g}(o,\cdot),\; b'=\b_{\g'}(o',\cdot)$ be the corresponding
Busemann functions. The \emph{horospheric product} $T\du T'$ is the graph with
the vertex set
$$
\{(x,x')\in T\times T': b(x)+b'(x')=0 \}
$$
and the edge set
$$
\bigl\{\bigl( (x,x'), (y,y') \bigr) : (x,y) \;\text{and}\;(x',y') \;\text{are
edges in}\; T,T',\;\text{respectively} \bigr\} \;.
$$
\end{defn}

\begin{rem}
In the cocycle language, the product $T\times T'$ is endowed with the
$\Z$-valued additive cocycle $c=\b_{\g}+\b_{\g'}$. Its \emph{kernel} $\ker
c=c^{-1}(0)\subset T\times T'$ consists of connected components (with respect
to the product graph structure) which are the horospheric products
corresponding to different choices of the roots $o,o'$.
\end{rem}

Geometrically one can think about the horospheric products in the following
way \cite{Kaimanovich-Woess02}. Draw the tree $T'$ upside down next to $T$ so
that the respective horospheres $H_k(T)$ and $H_{-k}(T')$ are at the same
level. Connect the two origins $o,o'$ with an elastic spring. It can move
along each of the two trees, may expand infinitely, but must always remain in
horizontal position. The vertex set of $T\du T'$ consists then of all
admissible positions of the spring. From a position $(x,x')$ with
$b(x)+b'(x')=0$ the spring may move downwards to one of the ``sons'' of $x$
and at the same time to the ``father'' of $x'$, or upwards in an analogous
way. Such a move corresponds to going to a neighbour $(y,y')$ of $(x,x')$, see
\figref{fig:dl}.

\begin{figure}[h]
\begin{center}
     \psfrag{o2}[cl][cl]{$o'$}
     \psfrag{o}[cl][cl]{$o$}
     \psfrag{g2}[cl][cl]{$\g'$}
     \psfrag{g}[cl][cl]{$\g$}
          \includegraphics[scale=.6]{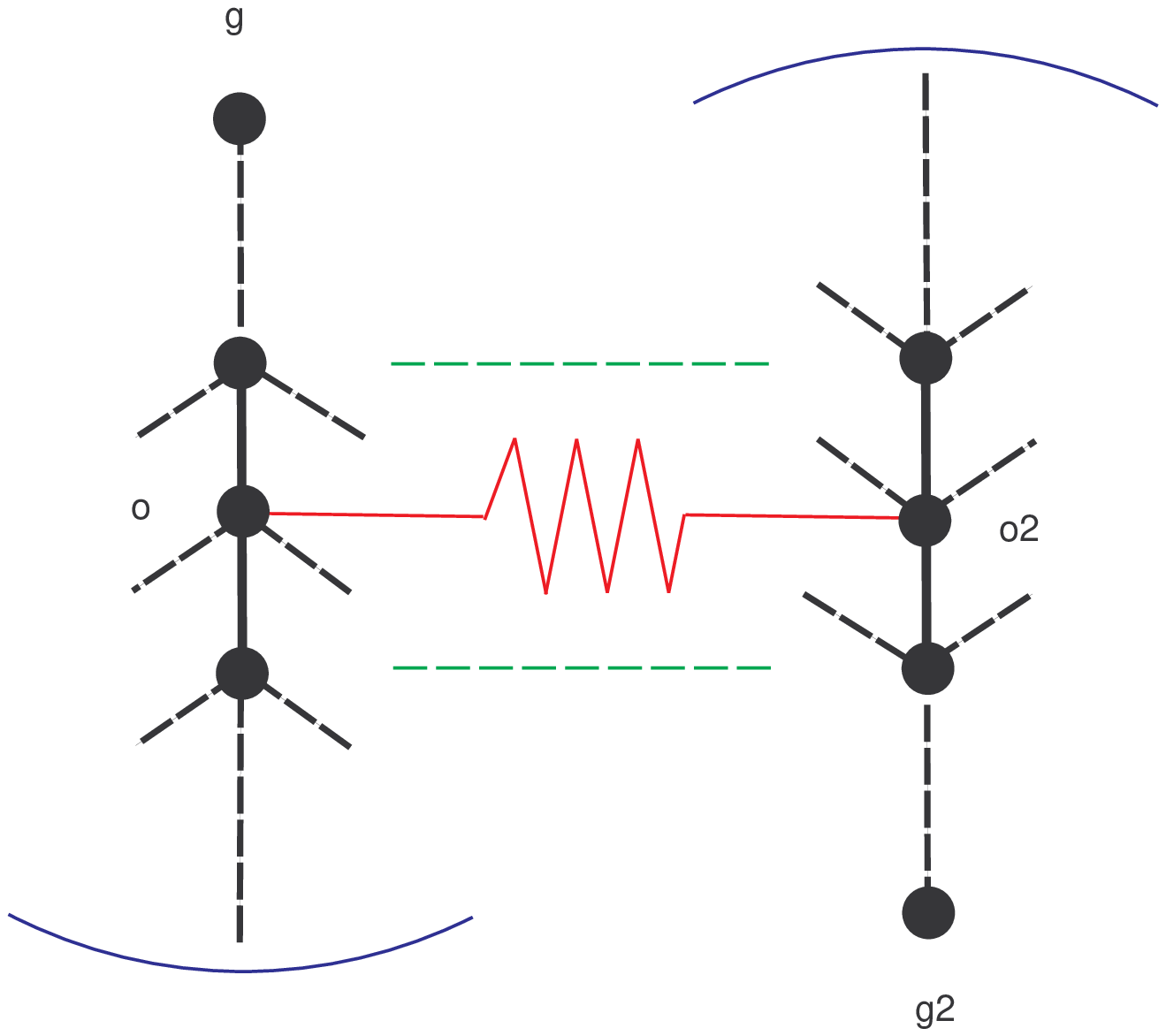}
          \end{center}
          \caption{}
          \label{fig:dl}
\end{figure}

\begin{rem}
This construction (in a different terminology) was first introduced by Diestel
and Leader \cite{Diestel-Leader01} in an attempt to answer a question of Woess
\cite[Problem~1]{Woess91}: \emph{is there a locally finite vertex-transitive
graph which is not quasi-isometric with a Cayley graph of some finitely
generated group?} They suggested the product $\Tb_3\du\Tb_4$ of homogeneous
trees of degrees 3 and 4, respectively, as a possible example. That this is
indeed so was only recently proved by Eskin, Fisher and Whyte
\cite{Eskin-Fisher-Whyte07}. In the meantime the construction of Diestel and
Leader attracted a lot of attention because of its numerous interesting
features (see \cite{Woess05,Bartholdi-Neuhauser-Woess08} and the references
therein). The arising graphs are also known under the names of
\emph{Diestel--Leader graphs} (when the multipliers are homogeneous trees) and
of \emph{horocyclic products} (however, we feel that the adjective
``horospheric'' is more appropriate, because the level sets of Busemann
functions in trees are anything but cycles).
\end{rem}

\subsection{Treed equivalence relations}

A graphed equivalence relation $(X,\mu,R,K)$ is \emph{treed} if a.e.\ leafwise
graph is a tree \cite{Adams90}. We shall denote by $T_x=[x]^K_\bullet$ the
leafwise tree of a point $x$ rooted at $x$. The \emph{boundary bundle} of a
treed equivalence relation $(X,\mu,R,K)$ is the set
$$
\wt X = \{(x,\g): x\in X,\g\in\pt T_x\} = \bigcup_{x\in X} \{x\}\times\pt
T_x\;,
$$
so that it is an analogue of the unit tangent bundle of a negatively curved
manifold (since we are talking about equivalence relations, a better analogue
is actually the unit tangent bundle of a foliation with negatively curved
leaves, like, for instance, the stable foliation of the geodesic flow on a
negatively curved manifold, see \remref{rem:geod} below).

We shall endow $\wt X$ with the equivalence relation (the \emph{boundary
extension} of $R$)
$$
\begin{aligned}
\wt R &= \bigl\{ \bigl( (x,\g),(y,\g) \bigr): (x,y)\in R, \;\g\in\pt
T_x\cong\pt T_y \bigr\} \\
&\cong \{ (x,y,\g): (x,y)\in R, \;\g\in\pt T_x\cong\pt T_y \}
\end{aligned}
$$
(if two points $x,y\in X$ are equivalent, then $T_x$ and $T_y$ coincide as
unrooted trees, so that there is a natural identification of the boundaries
$\pt T_x$ and $\pt T_y$) and with the treed graph structure $\wt K$ inherited
from $X$.

The equivalence relation $\wt R$ is endowed with the $\Z$-valued additive
cocycle (which we shall also call \emph{Busemann} along with the cocycle
\eqref{eq:Bu})
\begin{equation} \label{eq:B}
\wt\b: (x,y,\g) \mapsto \b_\g(x,y) \;.
\end{equation}
It will play an important role in the sequel.

In order to endow $\wt X$ with a Borel structure, we shall fix, once and
forever, a Borel identification of the space $X$ with the unit interval. This
identification provides us with a linear order on any subset of $X$. In
particular, the set of neighbours of any point $x\in X$ can be canonically
identified with the set $\{1,2,\dots,d\}$, where $d=\deg x$. Thus, in the case
of a treed equivalence relation we can record any leafwise geodesic
$x=x_0,x_1,x_2,\dots$ issued from a point $x\in X$ as a sequence
$n_1,n_2,\dots$, where $n_i$ is the position of $x_i$ among the neighbours of
$x_{i-1}$. In this way we obtain, for any $x\in X$, a one-to-one map $\tau_x$
from $\pt X$ to a Borel subset of $\N^\N$ (this is similar to the well-known
\emph{Ulam--Harris notation}). Note that the maps $\tau_x$ \emph{do} depend on
$x$ (not only on its equivalence class!), although the boundaries $\pt T_x,\pt
T_y$ can be identified for any two equivalent points $x,y\in X$.

Finally, let us introduce a measure on the boundary extension $\wt X$ which
would be quasi-invariant with respect to the equivalence relation $\wt R$.
Since $\wt X$ is fibered over $X$, it is natural to construct such a measure
by integrating a system of measures on the fibers against the measure $\mu$ on
the base.

\begin{defn} \label{def:ext}
Given a treed equivalence relation $(X,\mu,R,K)$, a system of finite measures
$\{\nu_x\}_{x\in X}$ on the boundaries $\pt T_x$ of leafwise trees $T_x$ is
\emph{measurable} if the map
$$
x\mapsto\tau_x(\nu_x)
$$
from $X$ to the space of measures on $\N^\N$ is (weakly) measurable (i.e., for
any measurable function $f$ on $\N^\N$ the integrals $\langle
f,\tau_x(\nu_x)\rangle$ depend on $x$ measurably). A measurable system of
boundary measures $\{\nu_x\}$ is \emph{quasi-invariant} if for $\mu_\#$-a.e.\
pair $(x,y)\in R$ the measures $\nu_x$ and $\nu_y$ are equivalent. A
measurable system of boundary measures $\{\nu_x\}$ gives rise to the measure
$$
d\wt\mu(x,\g) = d\mu(x)d\nu_x(\g)
$$
on the boundary bundle $\wt X$ which is called a \emph{boundary extension} of
$\mu$.
\end{defn}

\begin{rem}
Obviously,
$$
\|\wt\mu\| = \int \|\nu_x\| d\mu(x) \;,
$$
and the measure $\wt\mu$ is finite if and only if the above integral is
finite.
\end{rem}

\begin{rem}
Another definition of a measurable boundary system of measures over a graphed
equivalence relation with hyperbolic leaves is given in \cite{Kaimanovich04}
(in terms of separable measurable bundles of Banach spaces). One can establish
the equivalence of these two definitions for treed equivalence relations in a
rather straightforward (if tedious) way.
\end{rem}

\begin{prop}
Let $(X,\mu,R,K)$ be a treed equivalence relation. Quasi-invariance of a
measurable system of boundary measures $\{\nu_x\}$ is equivalent to
quasi-invariance of the measure $\wt\mu$ with respect to the equivalence
relation $\wt R$, and the Radon--Nikodym cocycle $\wt\D$ of the measure
$\wt\mu$ with respect to $\wt R$ is connected with the Radon--Nikodym cocycle
$\D$ of the measure $\mu$ with respect to the equivalence relation $R$ by the
formula
$$
\wt\D (x,y,\g) = \D(x,y) \frac{d\nu_y}{d\nu_x}(\g) \;.
$$
\end{prop}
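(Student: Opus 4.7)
The plan is to verify both claims simultaneously via Fubini's theorem applied to the fibered decomposition $d\wt\mu(x,\g)=d\nu_x(\g)\,d\mu(x)$, working with lifts of partial transformations of $R$.

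First I would observe that any partial transformation $\f\colon A\to B$ of $R$ lifts canonically to a partial transformation $\wt\f\colon\wt A\to\wt B$ of $\wt R$ defined by $\wt\f(x,\g)=(\f(x),\g)$, using the identification $\pt T_x\cong\pt T_{\f(x)}$ of boundaries along equivalent points. By the Feldman--Moore representation, $R$ is a countable union of graphs of such partial transformations, and consequently $\wt R$ is a countable union of graphs of their lifts; both quasi-invariance of $\wt\mu$ and the cocycle formula therefore reduce to corresponding statements about each individual $\wt\f$.

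Next I would push $\wt\mu|_{\wt A}$ forward under $\wt\f$. For a measurable $g\ge 0$ on $\wt B$, changing variables $y=\f(x)$ and using $d\f\mu/d\mu(y)=\D(y,\f^{-1}(y))$ (which follows from $\D(x,y)=d\mu/d\f\mu(y)$ together with the cocycle identity $\D(y,x)=\D(x,y)^{-1}$) yields
\[
\int_{\wt B}g\,d(\wt\f\,\wt\mu|_{\wt A})=\int_B\biggl(\int_{\pt T_y}g(y,\g)\,d\nu_{\f^{-1}(y)}(\g)\biggr)\D(y,\f^{-1}(y))\,d\mu(y).
\]
Comparing this with $\int g\,d\wt\mu|_{\wt B}=\int_B\int g(y,\g)\,d\nu_y(\g)\,d\mu(y)$ shows that $\wt\f\,\wt\mu|_{\wt A}\ll\wt\mu|_{\wt B}$ precisely when $\nu_{\f^{-1}(y)}\ll\nu_y$ for $\mu$-a.e.\ $y\in B$, with density $\D(y,\f^{-1}(y))\cdot d\nu_{\f^{-1}(y)}/d\nu_y(\g)$. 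Letting $\f$ range over a generating family of partial transformations for $R$ gives the equivalence of quasi-invariance of $\{\nu_x\}$ with quasi-invariance of $\wt\mu$; inverting the density and relabelling $x=\f^{-1}(y)$ via $\D(y,x)^{-1}=\D(x,y)$ and $(d\nu_x/d\nu_y)^{-1}=d\nu_y/d\nu_x$ then produces the stated formula $\wt\D(x,y,\g)=\D(x,y)\cdot d\nu_y/d\nu_x(\g)$.

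The main technical hurdle will be measurability: one must produce a version of $d\nu_y/d\nu_x(\g)$ that is jointly measurable on $\wt R$, rather than merely along each individual partial transformation. I expect this to follow from the measurability built into \defref{def:ext} together with a standard measurable disintegration argument in the Ulam--Harris coordinates $\tau_x$ fixed earlier; once this is in place, the Fubini manipulations above go through routinely.
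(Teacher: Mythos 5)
Your proof is correct and is essentially the paper's own argument: the paper proves this proposition by the one-line ``ratio of differentials'' computation $\wt\D(x,y,\g)=\frac{d\mu(y)d\nu_y(\g)}{d\mu(x)d\nu_x(\g)}$, which (as stated in \secref{sec:RN}) is exactly the shorthand for the Fubini/partial-transformation calculation you carry out in detail. Your version simply makes the measurability and change-of-variables steps explicit, which the paper leaves implicit.
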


\begin{proof}
In the language of ``differentials'' (see \secref{sec:RN}) of the involved
measures
$$
\wt\D (x,y,\g) = \frac{d\wt\mu(y,\g)}{d\wt\mu(x,\g)} =
\frac{d\mu(y)d\nu_y(\g)}{d\mu(x)d\nu_x(\g)} = \frac{d\mu(y)}{d\mu(x)} \cdot
\frac{d\nu_y(\g)}{d\nu_x(\g)} = \D(x,y) \frac{d\nu_y(\g)}{d\nu_x(\g)} \;.
$$
\end{proof}

\begin{cor} \label{cor:RN}
If the measure $\mu$ is $R$-invariant, then the Radon--Nikodym cocycle of the
measure $\wt\mu$ with respect to the equivalence relation $\wt R$ coincides
with the pairwise Radon--Nikodym derivatives of the boundary system of
measures $\{\nu_x\}$:
$$
\wt\D (x,y,\g) = \frac{d\nu_y}{d\nu_x}(\g) \;.
$$
\end{cor}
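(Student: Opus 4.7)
The plan is to derive \corref{cor:RN} as an immediate specialization of the preceding proposition. Since the proposition gives the general formula
$$
\wt\D(x,y,\g) = \D(x,y) \, \frac{d\nu_y}{d\nu_x}(\g)
$$
for an arbitrary quasi-invariant measure $\mu$, and since $R$-invariance of $\mu$ is defined in \secref{sec:RN} as $\D\equiv 1$, the conclusion is obtained by substitution.

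The only thing worth checking is that the hypothesis of the proposition still applies: one needs the system $\{\nu_x\}$ to be quasi-invariant in order for $\wt\mu$ to be quasi-invariant with respect to $\wt R$ and for $\wt\D$ to be defined at all. In the corollary this is taken for granted (the Radon--Nikodym cocycle $\wt\D$ is invoked by name), so no extra work is needed. Thus the proof is essentially a one-line invocation of the previous proposition with $\D\equiv 1$.

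There is no real obstacle here; the content of the corollary is just the observation that when $\mu$ itself contributes nothing to the Radon--Nikodym cocycle of its boundary extension, all of the Radon--Nikodym behaviour of $\wt\mu$ under $\wt R$ is encoded in the pairwise densities of the boundary system $\{\nu_x\}$. This is exactly the form in which the corollary will be used later, when conformal systems of boundary measures are introduced and one wants to read off $\wt\D$ directly from the Busemann cocycle via the conformality relation $d\nu_y/d\nu_x(\g) = \exp(-\l\b_\g(x,y))$.
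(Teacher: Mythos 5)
Your proof is correct and matches the paper's (implicit) argument exactly: the corollary is stated without proof precisely because it is the immediate specialization of the preceding proposition to the case $\D\equiv 1$. Your additional remark that quasi-invariance of the system $\{\nu_x\}$ is still needed for $\wt\D$ to be defined is a sensible sanity check and does not change the argument.
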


\begin{rem} \label{rem:geod}
For negatively curved manifolds similar boundary extensions (where the
boundary is the visibility sphere of the universal covering manifold)
naturally arise in the study of invariant measures of the geodesic flow (e.g.,
see \cite{Kaimanovich90,Ledrappier95}). Yet another boundary extension of a
different kind can be associated with the measure-theoretical Poisson
boundaries rather than with the topological ones (see \cite{Kaimanovich95}).
\end{rem}

\begin{defn} \label{def:conf}
A measurable system of boundary measures $\{\nu_x\}$ over a treed equivalence
relation $(X,\mu,R,K)$ is called \emph{conformal of dimension $\l>0$} if it is
quasi-invariant and its Radon--Nikodym derivatives satisfy the relation
\begin{equation} \label{eq:RN}
\frac{d\nu_y}{d\nu_x}(\g) = e^{-\l\b_\g(x,y)}
\quad\text{for}\;\wt\mu_\#\text{-a.e.}\;(x,y,\g)\in\wt R\;.
\end{equation}
\end{defn}

\begin{rem}
This definition is analogous to the definition of conformal streams (measures,
densities, see \cite{Patterson76,Sullivan79,Kaimanovich-Lyubich05}) for
negatively curved manifolds. Note that in the group case the identity
\eqref{eq:RN} only makes sense in combination with the requirement of
equivariance of the map $x\mapsto\nu_x$ (as otherwise one gets a conformal
system from an arbitrary measure by multiplying it by the exponent of the
Busemann cocycle). In our situation the equivariance condition is replaced
with the requirement that the system of measures $\{\nu_x\}$ be measurable (in
perfect agreement with the general spirit of the theory of equivalence
relations which consists in replacing group invariance with measurablity).
\end{rem}

\begin{rem}
If the measure $\mu$ is invariant, then, in view of \corref{cor:RN}, a system
$\{\nu_x\}$ is conformal if and only if the logarithm of the Radon--Nikodym
cocycle of the measure~$\wt\mu$ with respect to the equivalence relation $\wt
R$ is proportional to the Busemann cocycle on $\wt R$ with the proportionality
coefficient $-\l$.
\end{rem}

\begin{rem}
If the measure $\mu$ is finite invariant, and $\mu$-a.e.\ tree $T_x$ has at
least 3 ends, then in fact a.e.\ tree has a continuum of ends and the
equivalence relation $(X,R,\mu)$ is non-amenable \cite{Adams90}, which implies
that in this situation there are no invariant measurable systems of boundary
measures (i.e., there are no conformal systems of dimension~$0$)
\cite{Kaimanovich04}.
\end{rem}

\subsection{Horospheric product of treed equivalence relations}

Let us first remind that the \emph{product} of two equivalence relations
$(X,R)$ and $(X',R')$ is the equivalence relation
$$
R\times R' = \bigl\{\bigl( (x,x'),(y,y') \bigr): (x,y)\in R, (x',y')\in R'
\bigr\}
$$
on the state space $X\times X'$. If the relations $R,R'$ are endowed with the
respective graph structures $K,K'$, then the product relation carries the
natural product graph structure $K\times K'$ (an edge in the product is the
product of edges in the multipliers). Finally, if $\mu$ (resp., $\mu'$) is a
$R$- (resp., $R'$-) quasi-invariant measure on $X$ (resp., $X'$) with the
Radon--Nikodym cocycle $\D$ (resp., $\D'$), then the product measure
$\mu\times\mu'$ is $R\times R'$-quasi-invariant, and its Radon--Nikodym
cocycle is $\D\times\D'$.

Let now $(X,R,K),(X',R',K')$ be two treed equivalence relations, and let $(\wt
X,\wt R,\wt K)$ and $(\wt X',\wt R',\wt K')$ be their respective boundary
extensions endowed with the Busemann cocycles $\wt\b,\wt\b'$ \eqref{eq:B}, so
that their product $(\wt X\times\wt X',\wt R\times\wt R',\wt K\times\wt K')$
carries the cocycle $c=\wt\b+\wt\b'$.

\begin{defn}
The \emph{horospheric product} of treed equivalence relations $(X,R,K)$ and
$(X',R',K')$ is the equivalence relation $\RR=\ker c\subset\wt R\times\wt R'$
on the product $\wt X\times\wt X'$. It is endowed with the graph structure
$\KK = \RR \cap \wt K\times\wt K'$.
\end{defn}

Thus, the $\RR$-equivalence class of a point $(x,\g,x',\g')\in\wt X\times\wt
X'$ endowed with the graph structure $\KK$ is precisely the horospheric
product of the pointed at infinity rooted trees $([x]^K_\bullet,\g)$ and
$([x']^{K'}_\bullet,\g')$ in the sense of \defref{def:DL}.

Let us now endow the treed equivalence relations $(X,R,K)$ and $(X',R',K')$
with respective quasi-invariant measures $\mu,\mu'$, let $\{\nu_x\},
\{\nu'_{x'}\}$ be respective quasi-invariant measurable systems of boundary
measures, and let $\wt\mu,\wt\mu'$ be the corresponding quasi-invariant
measures for the boundary extensions $(\wt X,\wt R,\wt K)$ and $(\wt X',\wt
R',\wt K')$. Then the product measure $\wt\mu\times\wt\mu'$ is
$\RR$-quasi-invariant (since $\RR\subset \wt R\times\wt R'$), and its
Radon--Nikodym cocycle $\DD$ is the restriction of the Radon--Nikodym cocycle
of the measure $\wt\mu\times\wt\mu'$ from $\wt R\times\wt R'$ to $\RR$. Thus,
we obtain

\begin{thm} \label{th:hp}
Let $(X,\mu,R,K)$ and $(X',\mu',R',K')$ be treed equivalence relations with
finite invariant measures, and let $\{\nu_x\},\{\nu'_{x'}\}$ be respective
conformal measurable systems of boundary measures of the same dimension
$\l>0$. Then the resulting measure $\wt\mu\times\wt\mu'$ on the horospheric
product of these treed equivalence relations $(\wt X\times\wt
X',\wt\mu\times\wt\mu',\RR,\KK)$ is $\RR$-invariant.
\end{thm}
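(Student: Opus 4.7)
The proof is essentially a formal manipulation of Radon--Nikodym cocycles, so the plan is to track how the cocycles of the factor boundary extensions combine and then restrict to the kernel $\RR = \ker c$.

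First, I would invoke \corref{cor:RN} applied to each factor. Since $\mu$ is $R$-invariant and $\{\nu_x\}$ is conformal of dimension $\lambda$, the Radon--Nikodym cocycle of $\wt\mu$ with respect to $\wt R$ equals
\[
\wt\D(x,y,\g) \;=\; \frac{d\nu_y}{d\nu_x}(\g) \;=\; e^{-\l\b_\g(x,y)} \;=\; e^{-\l\wt\b(x,y,\g)} \;,
\]
and similarly $\wt\D'((x',\g'),(y',\g')) = e^{-\l\wt\b'(x',y',\g')}$. The identities hold $\wt\mu_\#$-a.e., respectively $\wt\mu'_\#$-a.e.

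Next, I would use the general fact (recalled just before the theorem) that the product measure $\wt\mu \times \wt\mu'$ is quasi-invariant with respect to the product equivalence relation $\wt R \times \wt R'$ with Radon--Nikodym cocycle $\wt\D \cdot \wt\D'$. Combining with the previous step, this cocycle equals
\[
e^{-\l\wt\b(x,y,\g)} \cdot e^{-\l\wt\b'(x',y',\g')} \;=\; e^{-\l\, c((x,\g,x',\g'),(y,\g,y',\g'))} \;,
\]
where $c = \wt\b + \wt\b'$ is the cocycle appearing in the definition of $\RR$.

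Finally, by the definition of the horospheric product, $\RR \subset \wt R \times \wt R'$ is precisely the subrelation on which $c$ vanishes. The Radon--Nikodym cocycle of $\wt\mu \times \wt\mu'$ with respect to $\RR$ is therefore the restriction of the above exponential, which is identically~$1$ on $\RR$. Hence $\wt\mu \times \wt\mu'$ is $\RR$-invariant, as claimed.

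The argument is essentially bookkeeping: all the conceptual weight is carried by \corref{cor:RN} and by the behaviour of Radon--Nikodym cocycles under products and restriction to subrelations. The only point requiring mild care is making sure the a.e.\ identities on each factor combine correctly to an a.e.\ identity on the product (and hence on $\RR$), which follows from Fubini applied to the left counting measures of the two factors. No genuine obstacle arises, and this is exactly the advertised parallel with the construction of an invariant measure of the geodesic flow from a conformal density in the negatively curved setting.
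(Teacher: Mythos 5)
Your proposal is correct and follows essentially the same route as the paper's own proof: apply \corref{cor:RN} together with \defref{def:conf} to identify the Radon--Nikodym cocycle of $\wt\mu\times\wt\mu'$ on $\wt R\times\wt R'$ as $e^{-\l c}$ with $c=\wt\b+\wt\b'$, and observe that this restricts to the constant $1$ on $\RR=\ker c$. The extra remark about combining the a.e.\ identities via Fubini is a reasonable bit of added care but does not change the argument.
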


\begin{proof}
By \corref{cor:RN} and \defref{def:conf} the Radon--Nikodym cocycle of the
measure $\wt\mu\times\wt\mu'$ with respect to the equivalence relation $\wt
R\times\wt R'$ is
$$
e^{-\l (\wt\b + \wt\b')} = e^{- \l c} \;,
$$
where $c=\wt\b+\wt\b'$ is precisely the cocycle whose kernel determines the
horospheric product of our treed equivalence relations.
\end{proof}

\begin{rem}
Our construction of an invariant measure from two conformal systems of
boundary measures of the same dimension is based on the same idea as the
construction of an invariant measure of the geodesic flow on a negatively
curved manifold from a single conformal measure (see \cite{Kaimanovich90} and
the Appendix in \cite{Kaimanovich-Lyubich05}). The only difference is that in
our situation we deal with two boundary systems rather than one in the
classical case.
\end{rem}

\begin{thm} \label{th:am}
Let $(X,\mu,R,K)$ and $(X',\mu',R',K')$ be treed equivalence relations with
finite invariant measures, and let $\{\nu_x\},\{\nu'_{x'}\}$ be respective
conformal measurable systems of boundary measures of the same dimension
$\l>0$. If the measures $\wt\mu,\wt\mu'$ are both finite, and if the valencies
of the structures $K,K'$ are uniformly bounded, then for
$\wt\mu\times\wt\mu'$-a.e.\ $(x,\g,x',\g')$ the horospheric product of the
pointed at infinity rooted trees $([x]^K_\bullet,\g)$ and
$([x']^{K'}_\bullet,\g')$ is amenable.
\end{thm}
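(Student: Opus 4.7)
The plan is to deduce the theorem from amenability of the entire relation $\RR$, and then force leafwise amenability via the bounded-valency/finite-invariant-measure implication recalled at the end of \secref{sec:amen}. As a preliminary, \thmref{th:hp} tells us that $\wt\mu\times\wt\mu'$ is $\RR$-invariant, and by hypothesis both $\wt\mu$ and $\wt\mu'$ are finite, so $\wt\mu\times\wt\mu'$ is a finite $\RR$-invariant measure. A direct inspection shows that $\KK=\RR\cap(\wt K\times\wt K')$ has uniformly bounded valency: a $\KK$-neighbour of $(x,\g,x',\g')$ is obtained by simultaneously moving to a $K$-neighbour of $x$ and a $K'$-neighbour of $x'$, so $\deg_\KK$ is bounded by the product of the uniform bounds on $\deg_K$ and $\deg_{K'}$.

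The heart of the argument, and the only step I expect to require genuine work, is establishing amenability of the boundary extensions $(\wt X,\wt\mu,\wt R)$ and $(\wt X',\wt\mu',\wt R')$. For $\wt R$ the plan is to exhibit an explicit Reiter sequence realising \eqref{eq:am}: for each $(x,\g)\in\wt X$, let $x=x_0,x_1,x_2,\dots$ be the geodesic ray in $T_x$ converging to $\g$, and set $\theta_n^{(x,\g)}$ to be the uniform probability on $\{x_0,\dots,x_{n-1}\}$, viewed as a probability measure on $[(x,\g)]_{\wt R}$ via the bijection $y\leftrightarrow(y,\g)$. For any equivalent pair $((x,\g),(y,\g))\in\wt R$, the rays $[x,\g)$ and $[y,\g)$ share a common tail past their confluence vertex, so the symmetric difference of their length-$n$ initial segments has cardinality bounded independently of $n$, giving $\|\theta_n^{(x,\g)}-\theta_n^{(y,\g)}\|\to 0$. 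Measurability of $x\mapsto\theta_n^{(x,\g)}$ is a routine verification using the Ulam--Harris coding fixed before \defref{def:ext}; the same construction handles $\wt R'$.

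To conclude, I will invoke two standard permanence properties of amenable equivalence relations. First, the product $\wt R\times\wt R'$ is amenable, since tensor products of Reiter sequences for the factors give a Reiter sequence for the product. Second, amenability passes to the Borel subrelation $\RR\subset\wt R\times\wt R'$ (restricting the ambient Reiter sequence along $\RR$-classes and renormalising). The resulting amenability of $\RR$, combined with finite $\RR$-invariance of $\wt\mu\times\wt\mu'$ and uniform boundedness of $\deg_\KK$, triggers the Carri\`ere--Ghys implication from \secref{sec:amen}: for $\wt\mu\times\wt\mu'$-a.e.\ point the leafwise graph $[(x,\g,x',\g')]^\KK$ is amenable, and by construction this leafwise graph is precisely the horospheric product of $([x]^K_\bullet,\g)$ and $([x']^{K'}_\bullet,\g')$.
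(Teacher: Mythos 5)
Your proof follows the same route as the paper's: amenability of the two pointed-at-infinity boundary extensions, passage to their product and then to the subrelation $\RR$, and finally the Carri\`ere--Ghys implication of \secref{sec:amen} applied to the finite $\RR$-invariant measure supplied by \thmref{th:hp} together with the uniform bound $\deg_\KK\le\deg_K\cdot\deg_{K'}$. The one substantive difference is that where the paper simply cites \cite{Kaimanovich04} for amenability of $(\wt X,\wt\mu,\wt R)$, you prove it directly by the Reiter sequence of uniform measures on initial segments of the ray to $\g$; that argument is correct --- the two rays merge at their confluence, so the symmetric difference of the two length-$n$ segments has cardinality bounded independently of $n$ and the total variation in \eqref{eq:am} is $O(1/n)$ --- and it is exactly the mechanism behind the cited result. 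The only step whose justification I would not accept as written is the passage of amenability from $\wt R\times\wt R'$ to the subrelation $\RR$: restricting a Reiter sequence to the $\RR$-class and renormalising does not work in general, because the mass that $\theta_n^z$ assigns to the (typically much smaller) $\RR$-class inside the ambient class may tend to zero, and normalisation then destroys the estimate $\|\theta_n^z-\theta_n^w\|\to 0$. The fact itself is standard and is what the paper invokes via the Connes--Feldman--Weiss theorem \cite{Connes-Feldman-Weiss81}: amenable relations are orbit equivalent to $\Z$-actions, i.e.\ hyperfinite, hyperfiniteness manifestly passes to Borel subrelations, and hyperfinite relations are amenable. Replace your parenthetical by that citation (or by any correct proof of permanence under subrelations) and the argument is complete and coincides with the paper's.
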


\begin{proof}
The treed equivalence relations $(\wt X,\wt\mu,\wt R,\wt K)$ and $(\wt
X',\wt\mu',\wt R',\wt K')$ are both pointed at infinity, and therefore
amenable \cite{Kaimanovich04}. Thus, their product is also amenable together
with the subrelation $\RR$ (this follows, for instance, from the description
of amenable equivalence relations as the ones which are orbit equivalent to
$\Z$-actions \cite{Connes-Feldman-Weiss81}). On the other hand, since the
measure $\wt\mu\times\wt\mu'$ is finite and $\RR$-invariant, amenability of
$\RR$ implies amenability of a.e.\ associated leafwise graph (see
\secref{sec:amen}).
\end{proof}

\section{Galton--Watson trees} \label{sec:3}

In this Section we shall discuss an example of invariant measures on treed
equivalence relations and the associated horospheric products arising from
branching processes.

\subsection{Augmented process}

Let $p=\{p_k\}$ be a probability distribution on the set
$\Z_+=\{0,1,2,\dots\}$. It gives rise to the random rooted tree $\TT_\oo$,
which is the ``genealogical tree'' of the associated \emph{Galton--Watson
branching process}: the number of offspring of the progenitor $\oo$ (the root
of the tree) is distributed according to the law $p$, each of them also
produces its own offspring according to the same law and independently of all
the rest, etc. (see \figref{fig:gw}). We shall denote by $\P=\P(p)$ the
corresponding probability measure on the space of locally finite rooted trees
$\T\subset\GG$.

\begin{figure}[h]
\begin{center}
          \psfrag{o}[cl][cl]{$\oo$}
          \includegraphics[scale=.6]{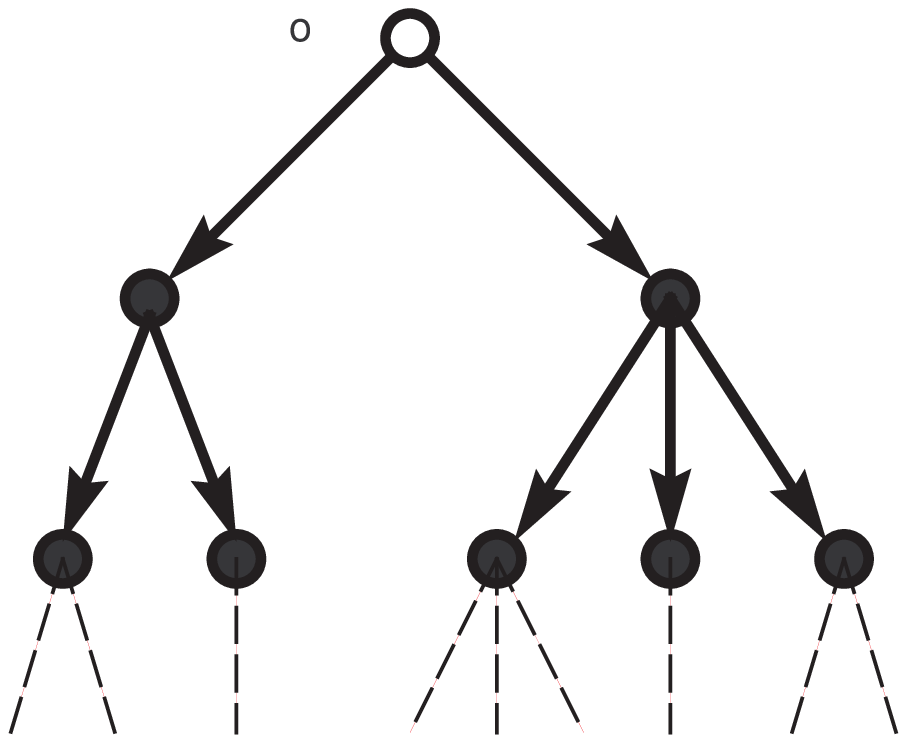}
          \end{center}
          \caption{}
          \label{fig:gw}
\end{figure}

For simplicity we shall assume that
\begin{equation} \label{eq:stand}
\ft{\textsf{(i)\quad\; $p_0=0$, so that the extinction probability is 0, and
the tree $\TT_\oo$ is a.s.\ infinite and has no leaves; \newline (ii) \quad
the support of the distribution $p$ contains more than one point, so that the
tree $\TT_\oo$ is a.s.\ rigid and has a continuum of ends.}}
\end{equation}

The measure $\P$ on the space $\T_\ni\subset\GG_\ni$ of rooted rigid trees is
\emph{not} quasi-invariant with respect to the natural equivalence relation
$\R$ (or, rather, its restriction to $\T_\ni$ which we also denote by $\R$,
see \secref{sec:gr}). The reason for this is the fact that the root $\oo$ is
different from other vertices of $\TT$, because statistically it has one
neighbour less (we skip the rigorous argument). However, a little modification
of the Galton--Watson process (which we describe below) provides an
$\R$-invariant measure on $\T_\ni$.

The \emph{augmented Galton--Watson process} introduced in
\cite{Lyons-Pemantle-Peres95} is defined in the same way as the original
Galton--Watson process with the only difference that the number of offspring
of the progenitor (only) has the distribution $p'_k=p_{k-1}$ (i.e., the root
has $k+1$ offspring with probability $p_k$), and these offspring all have
independent standard Galton--Watson descendant trees with offspring
distribution $\{p_k\}$. In other words, the number of offspring of the
progenitor is ``by force'' increased by one. Denote by $\P'=\P'(p)$ the
associated probability measure on the space of rigid rooted trees $\T_\ni$.

\subsection{Invariant measure}

\begin{thm} \label{th:inv}
The measure $\displaystyle\mu=\frac{1}{\deg}\P'$ on $\T_\ni$ is
$\R$-invariant.
\end{thm}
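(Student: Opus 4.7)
The plan is to prove $\R$-invariance of $\mu = \P'/\deg$ by checking the mass-transport identity $\mu_\# = \mu^\#$ on the graph structure $\K$ of $\R$, which by the discussion of \secref{sec:RN} is equivalent to invariance. Unfolding, $\mu_\#$ is the probability measure on $\K$ obtained by first sampling $T_o \sim \P'$ and then picking $v$ uniformly among the $\deg(o)$ neighbors of $o$ (its total mass is $\int(\deg/\deg)\,d\P' = 1$), and invariance amounts to $\th$-symmetry of $\mu_\#$, where $\th:(T_o,T_v)\leftrightarrow(T_v,T_o)$ is the swap involution.

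My strategy is to identify $\mu_\#$ with a manifestly swap-symmetric reference measure $\Q$ on $\K$, defined via the following ``glued double standard-GW'' construction: sample two independent standard Galton--Watson trees $T^{(1)},T^{(2)}\sim\P$, label their roots $o,v$ respectively, and glue them into a single tree by adjoining the edge $\{o,v\}$, yielding a point of $\K$. Since $T^{(1)}$ and $T^{(2)}$ are interchangeable i.i.d.\ copies, $\Q$ is $\th$-symmetric. Granting the identity $\mu_\# = \Q$, we obtain $\mu^\# = \th_*\mu_\# = \th_*\Q = \Q = \mu_\#$, i.e.\ $\R$-invariance of $\mu$.

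The identification $\mu_\# = \Q$ is verified by comparing both measures on cylinder events $[\sigma_e]\subset\K$ specifying the edge-centered ball up to a finite radius, with $\deg(o) = k+1$, $\deg(v) = l+1$, and the standard-GW subtree structures at the other neighbors of $o$ and $v$. Under $\Q$, the probability is $\P([\tau^{(1)}])\cdot\P([\tau^{(2)}])$ directly, where $\tau^{(1)},\tau^{(2)}$ are the two halves separated by the edge $\{o,v\}$. Under $\mu_\#$, it equals $\P'([\tau_o])/(k+1)$, where $\tau_o$ is the $o$-rooted ball. The essential cancellation is that the ``augmented'' root factor $p_k$ in $\P'$, together with the $(k+1)!$ multinomial count from orderings of $o$'s $k+1$ children in the unordered-cylinder formula, becomes $k!\,p_k$ after division by $k+1$ from the uniform neighbor choice --- and $k!\,p_k$ is exactly the analogous root factor in the standard-GW probability $\P([\tau^{(1)}])$ (whose root has $k$ children). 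The subtree at $v$ under $\P'$ is itself a standard-GW subtree, contributing $l!\,p_l$, which matches the root factor of $\P([\tau^{(2)}])$; and all remaining subtree factors at non-root vertices agree between the two descriptions.

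The main obstacle lies in the careful combinatorial bookkeeping of the ordering multinomial factors in the unordered-cylinder probabilities of $\P$ and $\P'$ on $\GG_\ni$, whose points are rooted rigid trees up to root-preserving isomorphism. Rigidity of the infinite sample tree guarantees that all children at each vertex become distinguishable at sufficiently large depth (otherwise, swapping identical sibling subtrees would produce a nontrivial isometry of the full tree, contradicting rigidity), which ensures that these combinatorial factors are well-defined and that the two descriptions of $\Q$ coincide, completing the proof.
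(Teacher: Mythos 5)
Your proof is correct and follows essentially the same route as the paper: the paper's auxiliary measure $\P_{\!\!\!\leftrightarrow}$ on $\R_1=\K$ (two independent standard Galton--Watson trees joined by an edge at their roots) is exactly your glued measure $\Q$, its identification with the restriction of $\mu_\#$ to $\K$ rests on the same $(k+1)!\,p_k/(k+1)=k!\,p_k$ cancellation you carry out (a step the paper merely asserts), and invariance is deduced from the manifest symmetry of this measure under the swap involution $\th$. The only step you compress is the passage from symmetry on $\K$ to invariance under all of $\R$ --- one must first get quasi-invariance by saturating null sets edge by edge and then invoke the multiplicative cocycle identity together with leafwise connectedness to upgrade $\D\equiv 1$ on $\K$ to $\D\equiv 1$ on $\R$ --- which the paper spells out explicitly in its closing paragraph.
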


This theorem was proved in \cite{Kaimanovich98} by using the fact that $\P'$
is a stationary measure of the leafwise simple random walk on $(\T_\ni,\R)$
\cite{Lyons-Pemantle-Peres95} and a relation between stationary and invariant
measures established in \cite{Kaimanovich98} (cf. \secref{sec:str}). For the
sake of completeness we shall give a simple direct proof of \thmref{th:inv}.

\begin{proof}[Proof of \thmref{th:inv}]
Let
$$
\R_1 = \{(x,y)\in\R: d(x,y)=1\} \subset\T_\ni\times\T_\ni \;,
$$
where $d$ is the graph metric of the canonical graph structure on the
equivalence relation~$\R$. One can think about $\R_1$ as the set of
\emph{doubly rooted rigid trees}; its elements are triples $(T,o,o')$, where
$T$ is a rigid tree, $o\in T$ is its \emph{principal root}, and its
\emph{secondary root} $o'\in T$ is at distance 1 from $o$. Denote by
$\P_{\!\!\!\leftrightarrow}$ the probability measure on $\R_1$ obtained in the
following way: consider the principal and the secondary roots as the
progenitors of two independent Galton--Watson trees with the distribution $p$,
and then join these roots with an edge, see \figref{fig:gw2}.

\begin{figure}[h]
\begin{center}
          \psfrag{o}[cl][cl]{$o$}
          \psfrag{o1}[cl][cl]{$o'$}
          \includegraphics[scale=.6]{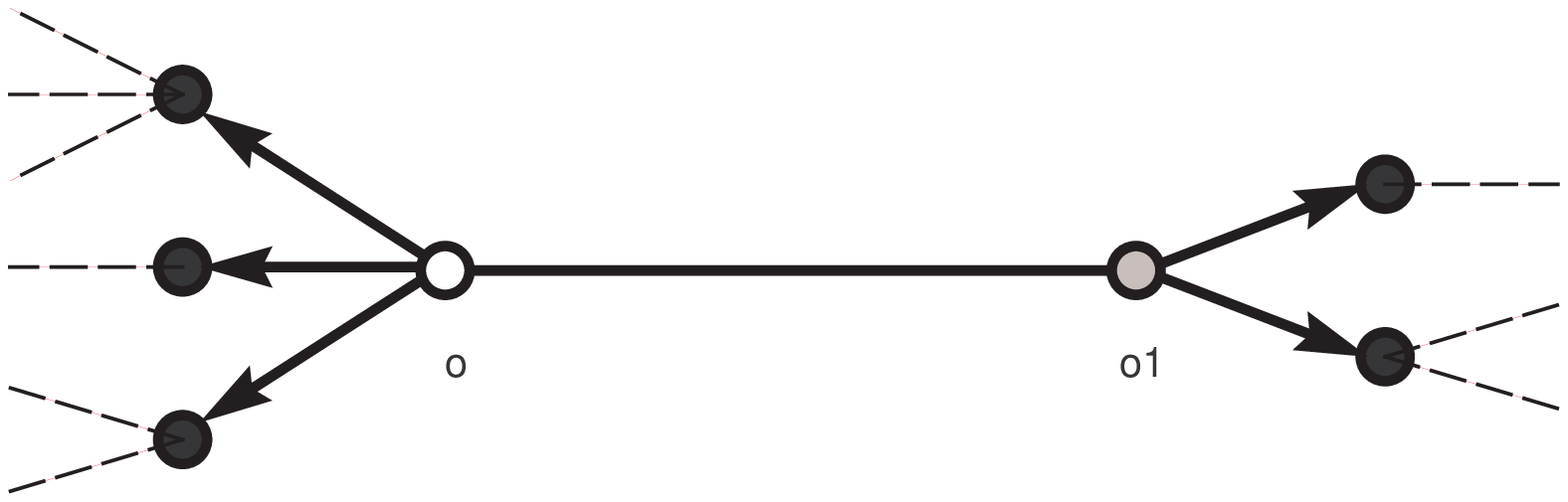}
          \end{center}
          \caption{}
          \label{fig:gw2}
\end{figure}

The measure $\P_{\!\!\!\leftrightarrow}$ coincides with the restriction to
$\R_1$ of the left counting measure $\mu_\#$ associated with the measure
$\mu$. Since $\P_{\!\!\!\leftrightarrow}$ is obviously invariant with respect
to the involution~$\th$ (which consists in exchanging the principal and the
secondary roots, see \figref{fig:flip}),
\begin{equation} \label{eq:sec}
\ft{\textsf{any partial transformation of the equivalence relation $\R$ whose
graph is contained in $\R_1$ preserves the measure $\mu$.}}
\end{equation}
This property easily implies that the measure $\mu$ is preserved by \emph{all}
partial transformations of $\R$, i.e, is $\R$-invariant. Indeed, let $A$ be a
$\mu$-negligible subset of $\T_\ni$. Then by \eqref{eq:sec} its
1-neighbourhood (with respect to the leafwise graph distance) is
$\mu$-negligible as well, and so on, so that the $\R$-saturation of $A$ is
also $\mu$-negligible. Thus, $\mu$ is $\R$-quasi-invariant. By \eqref{eq:sec}
its Radon--Nikodym cocycle $\D$ is identically 1 on $\R_1$; therefore by the
cocycle identity $\D\equiv 1$ on $\R$.
\end{proof}

\begin{figure}[h]
\begin{center}
          \psfrag{T}[cl][cl]{$\th$}
          \includegraphics[scale=.6]{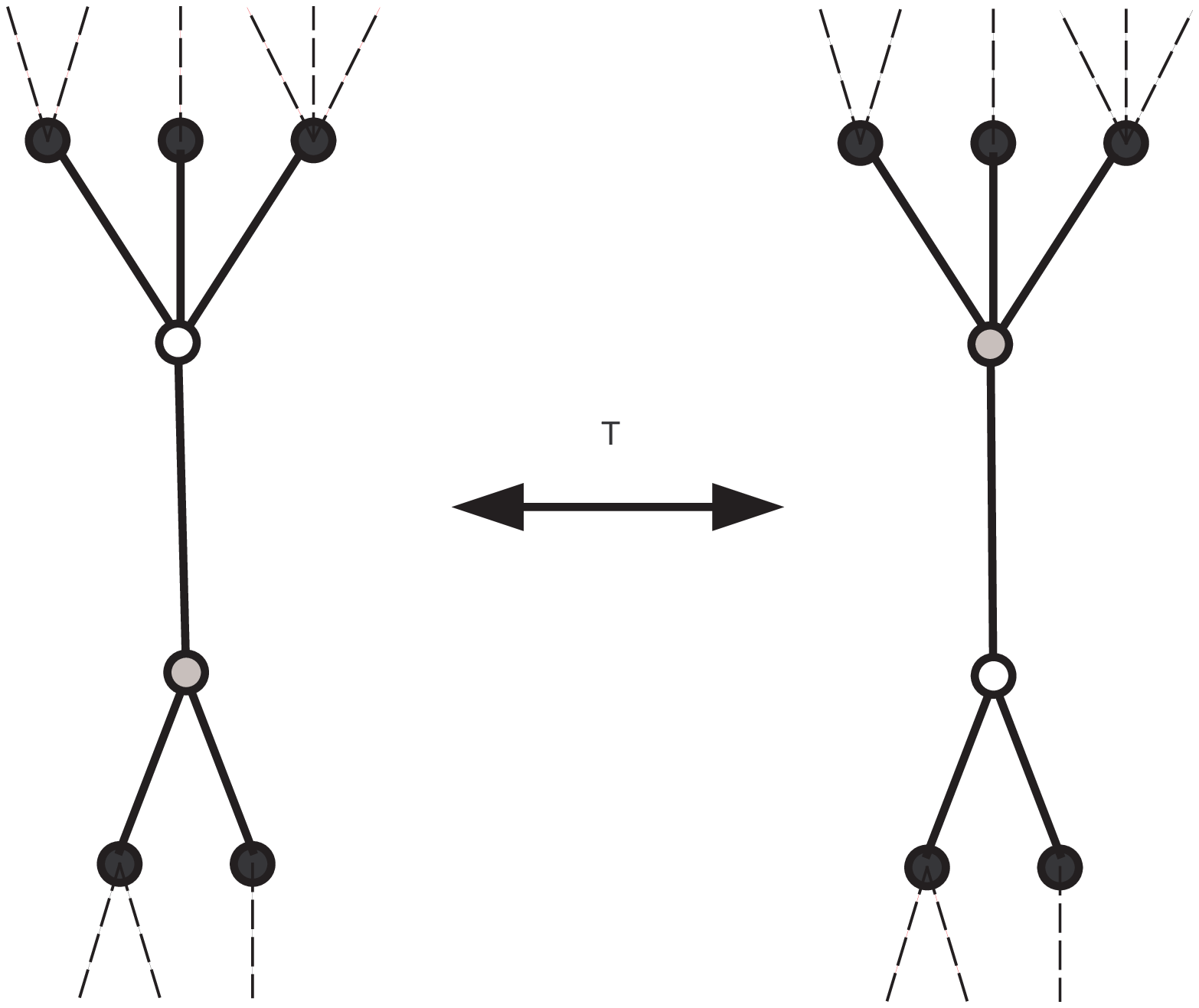}
          \end{center}
          \caption{}
          \label{fig:flip}
\end{figure}

\subsection{Boundary measure}

Given a rooted tree $T_o$ denote by $S^n_o\subset T$ the $n$-sphere centered
at the root $o$. If the distribution $p$ has a finite first moment
$$
m = \sum k \,p_k \;,
$$
then by \eqref{eq:stand} $m>1$ (the associated branching process is
\emph{supercritical}), and, as it was noticed already by Doob (this was one of
the first applications of the martingale theory), for $\P$-a.e.\
Galton--Watson tree $T_o$ there exists a limit
\begin{equation} \label{eq:L}
\lim_{n\to\infty}\frac{|S^n_o|}{m^n} = L\;.
\end{equation}
Earlier works containing sufficient conditions for a.s.\ positivity of the
limit \eqref{eq:L} \cite{Harris48,Levinson59} culminated in the following

\begin{thm}[Kesten--Stigum
 \cite{Kesten-Stigum66,Athreya-Ney72,Lyons-Pemantle-Peres95a}] \label{th:KS}
Under the assumption $p_0=0$ either
\begin{itemize}
    \item[(i)]
    $\sum k \log k \,p_k < \infty$,
    \item[(ii)]
    $L>0$  $\P$-a.s.,
    \item[(iii)]
    $\E L = 1$, where $\E$ denotes the expectation with respect to the measure
    $\P$;
\end{itemize}
or
\begin{itemize}
    \item[(i$'$)]
    $\sum k \log k\, p_k = \infty$,
    \item[(ii$'$)]
    $L=0$  $\P$-a.s.
\end{itemize}
\end{thm}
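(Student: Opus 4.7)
The plan is to follow the size-biasing approach of Lyons, Pemantle and Peres. First, observe that $W_n := |S^n_o|/m^n$ is a non-negative martingale with respect to the filtration $(\mathcal F_n)$ generated by the first $n$ generations of $\TT$, since conditionally on $\mathcal F_n$ the expected size of generation $n+1$ is $m\cdot|S^n_o|$. Hence $W_n$ converges $\P$-a.s.\ to a finite non-negative limit $L$, proving \eqref{eq:L}. The real content of the theorem is the dichotomy between $\E L=1$ (equivalently $L>0$ $\P$-a.s., via the standard $0$--$1$ law for $\P(L>0)$ combined with the branching identity for $L$ and the hypothesis $p_0=0$) and $L\equiv 0$ $\P$-a.s.

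The key device is the \emph{size-biased} Galton--Watson measure $\wh\P$, supported on trees equipped with a distinguished infinite ``spine''. Under $\wh\P$ the offspring number of the $k$-th spine vertex is sampled from the size-biased law $\wh p_j = j p_j/m$, a uniformly chosen offspring is selected to continue the spine, and the remaining offspring generate independent ordinary Galton--Watson subtrees. A direct computation of finite-dimensional marginals (matching the spine decomposition against the generational decomposition of $\P$) yields the Radon--Nikodym identity
$$
\frac{d\wh\P}{d\P}\bigg|_{\mathcal F_n} = W_n \;.
$$
Classical martingale theory for non-negative densities then translates the problem into statements under $\wh\P$: one has $\E L = 1$ iff $(W_n)$ is uniformly $\P$-integrable iff $\wh\P(\limsup W_n<\infty)=1$, while $L=0$ $\P$-a.s.\ iff $\wh\P(\limsup W_n=\infty)=1$.

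Under $\wh\P$ the $n$-th generation contains at least the size-biased offspring count $\wh Z_n$ of the $n$-th spine vertex, so $W_n \geq \wh Z_n/m^n$; a matching upper bound comes from the law of large numbers applied to the i.i.d.\ ordinary subtrees hanging off the spine. A Borel--Cantelli argument then shows that $\wh Z_n/m^n$ remains bounded (respectively tends to infinity) $\wh\P$-a.s.\ precisely when $\wh\E[\log^+\wh Z]<\infty$ (respectively $=\infty$). Since
$$
\wh\E[\log^+\wh Z] = \sum_k \log^+ k \cdot \frac{k p_k}{m} \;,
$$
this moment is finite if and only if $\sum k\log k\,p_k < \infty$, which yields the two halves of the dichotomy.

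The principal obstacle is the rigorous construction of $\wh\P$ on a suitable measurable space of trees-with-spine and the verification of the likelihood-ratio identity. This requires matching, at each generation, the uniform choice of spine continuation with the size-biasing factor $j/m$ and checking that summing over all possible spine positions recovers exactly $W_n\,d\P$ on $\mathcal F_n$. Once this identity is in place the remainder is a clean application of the moment calculation under the size-biased law, together with the martingale dichotomy recalled above.
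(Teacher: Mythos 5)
The paper does not prove this statement: it is imported as the classical Kesten--Stigum theorem, and one of the three references attached to it, \cite{Lyons-Pemantle-Peres95a}, contains precisely the argument you outline. So your proposal is not a different route but a (correct) reconstruction of the cited ``conceptual proof'' of Lyons, Pemantle and Peres: martingale convergence of $W_n=|S^n_o|/m^n$, the size-biased tree with spine satisfying $d\wh\P/d\P|_{\mathcal F_n}=W_n$, the absolute-continuity versus singularity dichotomy for martingale densities, and the reduction to the $\log$-moment of the size-biased offspring law, which equals $\frac1m\sum k\log^+ k\,p_k$.

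Two places where your wording should be tightened before this counts as a proof. First, in the convergent case the needed upper bound on $W_n$ is not really a law of large numbers for the subtrees off the spine: the standard step is to condition on the spine offspring numbers $\wh Z_1,\wh Z_2,\dots$ and compute $\wh\E\,[W_n\mid\text{spine}]=m^{-n}+\sum_{k\le n}(\wh Z_k-1)m^{-k}$; finiteness of $\wh\E\log^+\wh Z$ gives $\wh Z_k\le a^k$ eventually for any $a\in(1,m)$ by Borel--Cantelli, so this sum stays bounded, and conditional Fatou yields $\liminf_n W_n<\infty$ $\wh\P$-a.s. Since $1/W_n$ is a nonnegative $\wh\P$-supermartingale, $W_n$ has a $\wh\P$-a.s.\ limit in $[0,\infty]$, so $\liminf<\infty$ upgrades to $\limsup<\infty$, which is what the dichotomy lemma requires. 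Second, in the divergent case $\wh Z_n/m^n$ does not ``tend to infinity''; the second Borel--Cantelli lemma applied to the i.i.d.\ $\wh Z_n$ gives only $\limsup_n\wh Z_n/m^n=\infty$ $\wh\P$-a.s., which, combined with $W_n\ge\wh Z_n/m^n$ and the a.s.\ existence of the limit, is all that is needed. With these clarifications, and the routine construction of $\wh\P$ that you correctly flag as the remaining bookkeeping, the outline is sound.
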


The idea that existence and positivity of the limit \eqref{eq:L} can be used
in order to define a measure on the boundary of the Galton--Watson tree is
very natural, and apparently for the first time appeared in \cite{Holmes73}.
Nowadays this boundary measure is usually known under the name of the
\emph{branching measure} (for instance, see \cite{Liu2001} and the references
therein). We shall need this result in a slightly modified form: for the
augmented Galton--Watson trees instead of the usual ones.

\begin{thm} \label{th:exist}
Denote by $\Nu^n_o$ the counting measure on the $n$-sphere $S^n_o$ of a rooted
tree~$T_o$. If the distribution $p=(p_k)$ satisfies condition \textup{(i)}
from \thmref{th:KS} then the limit measure (with respect to the weak$^*$
topology on the compactification $\ov T$)
\begin{equation} \label{eq:lim}
\nu=\lim_n \frac{\Nu^n_o}{m^n}
\end{equation}
exists for $\P'$-a.e.\ tree $T_o$, and the expectation of its norm is
\begin{equation} \label{eq:1m}
\E' \|\nu\| = 1+\frac1m \;.
\end{equation}
\end{thm}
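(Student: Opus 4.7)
The plan is to exploit the independence structure of the augmented process by decomposing an augmented tree into i.i.d.\ standard Galton--Watson subtrees hanging off the root and applying the classical Kesten--Stigum theorem (\thmref{th:KS}) separately to each piece.

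First I would observe that under $\P'$ a sample tree $T_o$ has the following structure: the root $o$ has $K+1$ children $o_1,\dots,o_{K+1}$ with $K$ distributed according to $p$, and the subtrees $T^{(i)}$ rooted at the $o_i$ are i.i.d.\ standard Galton--Watson trees of law $\P=\P(p)$, independent of $K$. Consequently, for every $n\ge 1$ the $n$-sphere decomposes as the disjoint union $S^n_o = \bigsqcup_{i=1}^{K+1} S^{n-1}_{o_i}(T^{(i)})$, and therefore the normalized counting measures satisfy
\begin{equation*}
\frac{\Nu^n_o}{m^n} \;=\; \frac{1}{m}\sum_{i=1}^{K+1}\frac{\Nu^{n-1}_{o_i}(T^{(i)})}{m^{n-1}} \;.
\end{equation*}

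Next I would invoke the measure-valued version of Kesten--Stigum for standard Galton--Watson trees: under condition (i) of \thmref{th:KS}, for each subtree $T^{(i)}$ the normalized counting measures $\Nu^{n-1}_{o_i}(T^{(i)})/m^{n-1}$ converge $\P$-a.s.\ in the weak$^*$ topology on the compactification $\ov{T^{(i)}}$ to a finite limit measure $\nu^{(i)}$ of total mass $L^{(i)}$, with $\E L^{(i)}=1$. This is the standard branching measure; it can be obtained from the scalar statement \eqref{eq:L} applied simultaneously to every sub-branching process rooted at a vertex of $T^{(i)}$, since the limit mass on each cylinder set of $\pt T^{(i)}$ is a.s.\ well-defined and the collection of cylinder sets generates the topology. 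Embedding each $\ov{T^{(i)}}$ as a clopen subset of $\ov{T_o}$ and using that there are only $K+1<\infty$ terms, the convergence transfers to $\ov{T_o}$ and produces the required limit
\begin{equation*}
\nu \;=\; \frac{1}{m}\sum_{i=1}^{K+1}\nu^{(i)} \;.
\end{equation*}

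Finally, to compute the expected total mass I would use the independence of the family $\{L^{(i)}\}$ from $K$. Since $\E'(K+1)=m+1$ under the augmented law while $\E L^{(i)}=1$ for each $i$, conditioning on $K$ gives
\begin{equation*}
\E'\|\nu\| \;=\; \frac{1}{m}\,\E'\!\left[\sum_{i=1}^{K+1} L^{(i)}\right] \;=\; \frac{\E'(K+1)\cdot\E L}{m} \;=\; \frac{m+1}{m} \;=\; 1+\frac{1}{m},
\end{equation*}
which is precisely \eqref{eq:1m}. The only non-routine step is the upgrade from the scalar Kesten--Stigum limit \eqref{eq:L} to weak$^*$ convergence of the full spherical measures on $\ov{T_o}$; but this is forced by applying the scalar statement to the branching processes based at each vertex of every fixed generation, so it is a bookkeeping exercise rather than a genuine obstacle.
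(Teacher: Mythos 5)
Your proposal is correct and follows essentially the same route as the paper's proof: the paper also reduces weak$^*$ convergence to convergence of the normalized sphere counts on the shadows (cylinder sets) $\S_o^x$, each of which is governed by the scalar Kesten--Stigum limit \eqref{eq:L} for the standard Galton--Watson tree growing from $x$, and obtains \eqref{eq:1m} from the same accounting that the root has $m+1$ expected offspring each contributing expected boundary mass $1/m$. Your explicit first-level decomposition of the augmented tree into $K+1$ i.i.d.\ standard subtrees is only a mild repackaging of that argument, not a different method.
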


\begin{proof}
For a point $x\in T\setminus\{o\}$ denote by $\S_o^x\subset\pt T$ the
\emph{shadow} of the point $x$ as viewed from the root $o$, i.e., the set of
endpoints of all geodesic rays issued from $o$ and passing through $x$, see
\figref{fig:shadow}.

\begin{figure}[h]
\begin{center}
     \psfrag{x}[cl][cl]{$o$}
     \psfrag{y}[cl][cl]{$x$}
     \psfrag{S}[cl][cl]{$\S_o^x$}
     \psfrag{p}[cl][cl]{$\pt T$}
          \includegraphics[scale=.6]{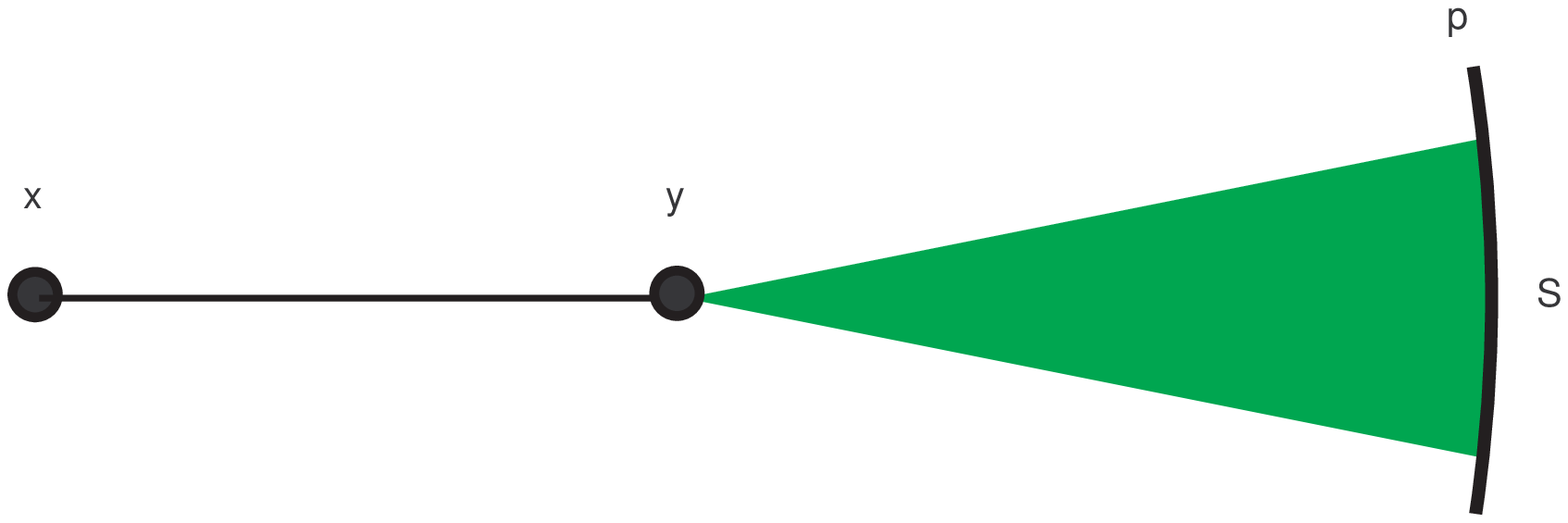}
          \end{center}
          \caption{}
          \label{fig:shadow}
\end{figure}

Weak convergence of the sequence \eqref{eq:lim} is equivalent to convergence
of the sequences
$$
\frac{\Nu^n_o(\S_o^x)}{m^n} = \frac{|S^n_o\cap\S_o^x|}{m^n}
$$
for all $x\in T$, which follows from \eqref{eq:L}, because $\S_o^x\cap T$ is a
$\P$-distributed Galton--Watson tree growing from the root $x$. Formula
\eqref{eq:1m} is then the result of ``rescaling'' property (iii) from
\thmref{th:KS}: the expected number of offspring of the root is $m+1$, whereas
for each of them the expected mass of the boundary measure is $1/m$.
\end{proof}

Now we can endow the treed equivalence relation $(\T_\ni,\P',\R,\K)$ with the
measurable system of boundary measures $\{\nu_x\},\,x\in\T_\ni$ arising from
\thmref{th:exist}. The fact that the measures $\nu_x$ come from the counting
measures on spheres rescaled by powers of the constant $m$ immediately implies

\begin{thm}
Under conditions of \thmref{th:exist} the system of boundary measures
$\{\nu_x\}$ on the treed equivalence relation $(\T_\ni,\P',\R,\K)$ is
conformal with the exponent $\l=\log m$.
\end{thm}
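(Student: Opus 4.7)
The plan is to establish the Radon--Nikodym identity
\[
\frac{d\nu_y}{d\nu_x}(\g) \;=\; m^{-\b_\g(x,y)}
\]
for $\wt{\P'}_\#$-a.e.\ $(x,y,\g)\in\wt\R$, since this simultaneously yields the quasi-invariance of the system $\{\nu_x\}$ (the density being strictly positive) and the conformal dimension $\l=\log m$ required in \defref{def:conf}. Pairwise Radon--Nikodym derivatives form a multiplicative cocycle over $\R$ while $\b_\g$ is an additive cocycle, and $\R$ is generated by its neighbour subrelation $\R_1=\K$. Hence it is enough to verify the identity for adjacent pairs $(x,y)\in\R_1$.

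Fix a rigid tree $T\in\T_\ni$, two adjacent vertices $x,y\in T$, and a boundary point $\g\in\pt T$. Let $o_\g\in\{x,y\}$ be the confluence of the rays $[x,\g)$ and $[y,\g)$. Choose any vertex $z$ far out on the common ray $[o_\g,\g)$; then the shadows $\S_x^z$ and $\S_y^z$ coincide as subsets of $\pt T$, and the subtree of descendants of $z$ is a standard $\P$-distributed Galton--Watson tree rooted at $z$. Because the sphere of radius $n$ around $x$ intersected with $\S_x^z$ coincides with the sphere of radius $n-d(x,z)$ around $z$ in that subtree, the weak-limit formula \eqref{eq:lim} gives
\[
\nu_x(\S_x^z) \;=\; \lim_n \frac{|S_x^n\cap\S_x^z|}{m^n} \;=\; m^{-d(x,z)}\,L_z \;,
\]
where $L_z$ is the Doob martingale limit \eqref{eq:L} for the subtree at $z$, and by the same token $\nu_y(\S_y^z)=m^{-d(y,z)}L_z$. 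Taking the ratio and invoking \eqref{eq:Bu} produces
\[
\frac{\nu_y(\S_y^z)}{\nu_x(\S_x^z)} \;=\; m^{d(x,z)-d(y,z)} \;=\; m^{-\b_\g(x,y)} \;,
\]
a constant independent of the choice of $z$.

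As $z$ recedes to infinity along $[x,\g)$ the shadows $\S_x^z$ form a descending neighbourhood basis of $\g$ in $\pt T$. Hypothesis (i) of \thmref{th:KS} is in force, so by (ii) of that theorem $L_z>0$ almost surely for the subtrees in question and the shadow ratios above are well defined. A standard differentiation-of-measures argument then promotes these constant shadow ratios to the pointwise identity $(d\nu_y/d\nu_x)(\g)=m^{-\b_\g(x,y)}$ at $\nu_x$-a.e.\ $\g$. Integrating over $(x,y)\in\R_1$ against $\P'_\#$ and extending along chains of neighbours via the cocycle identity yields the required formula throughout $\wt\R$.

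The only real technical point is the last measure-theoretic step: the shadow computation determines the density only on an algebra generating the Borel $\sigma$-algebra of $\pt T$, and one must verify that it is in fact the Radon--Nikodym derivative, uniformly over a $\P'_\#$-conull subset of $\R_1$. This reduces to measurably selecting the almost sure event of \thmref{th:KS}(ii), which is routine given the measurability of the system $\{\nu_x\}$ already built into \thmref{th:exist}.
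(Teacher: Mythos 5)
Your argument is correct and is precisely the detailed version of what the paper leaves implicit: the paper's entire proof is the one-line remark that the theorem follows ``immediately'' from the fact that the $\nu_x$ arise as sphere counting measures rescaled by powers of $m$, and your shadow computation $\nu_x(\S_x^z)=m^{-d(x,z)}L_z$, the resulting ratio $m^{-\b_\g(x,y)}$ on a generating $\pi$-system of shadows, and the reduction to adjacent pairs via the cocycle identity are exactly the content of that remark. (If anything, the final step is even easier than you suggest: since $\b_\g(x,y)=\pm1$ is constant on each of the two shadows into which $\pt T$ splits for adjacent $x,y$, a Dynkin-class argument already gives $\nu_y=m^{-\b_\g(x,y)}\nu_x$ on each half without invoking a differentiation theorem.)
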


In view of \thmref{th:hp} we now obtain

\begin{thm}
Let $p=(p_k)$ and $q=(q_k)$ be two distributions satisfying conditions
\eqref{eq:stand} and condition \textup{(i)} from \thmref{th:KS}, and such that
they have the same mean
$$
m=\sum k\, p_k =\sum k\, q_k \;.
$$
Denote by $\P'$ and $\Q'$ the respective augmented Galton--Watson measures on
$\T_\ni$, and let measures $\wt\P'$ and $\wt\Q'$ on the boundary bundle
$\wt\T_\ni$ be the boundary extensions of $\P'$ and $\Q'$ determined by the
respective systems of boundary measures from \thmref{th:exist}. Then the image
of the product measure $\wt\P'\times\wt\Q'$ under the map $(T,T')\mapsto T\du
T'$ is an $\R$-invariant finite measure on $\GG_\ni$.
\end{thm}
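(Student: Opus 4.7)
The plan is to deduce the result from \thmref{th:hp} combined with \thmref{th:inv}. First, \thmref{th:inv} turns the augmented Galton--Watson measures $\P',\Q'$ into finite $\R$-invariant measures $\mu=\frac{1}{\deg}\P'$ and $\mu'=\frac{1}{\deg'}\Q'$ on $\T_\ni$ (finiteness uses $\deg\ge 2$ everywhere, itself a consequence of \eqref{eq:stand}(i)). The preceding theorem supplies conformal systems of boundary measures $\{\nu_x\},\{\nu'_{x'}\}$ of the same dimension $\lambda=\log m$---the matching of dimensions being exactly the hypothesis that $p$ and $q$ share the mean. Since conformality is a property of the ratios $d\nu_y/d\nu_x$ alone, it survives rescaling the base from $\P'$ to $\mu$, so the systems remain conformal over $(\T_\ni,\mu,\R,\K)$ and $(\T_\ni,\mu',\R,\K)$.

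I then verify that the boundary extensions are finite: by the remark following \defref{def:ext} together with \eqref{eq:1m}, one has $\|\wt\mu\|=\int\|\nu_x\|\,d\mu(x)\le\E'\|\nu\|=1+1/m<\infty$, and similarly for $\wt\mu'$. All hypotheses of \thmref{th:hp} are now in place, and it produces a finite $\RR$-invariant measure $\wt\mu\times\wt\mu'$ on the horospheric product treed equivalence relation $(\wt\T_\ni\times\wt\T_\ni,\RR,\KK)$.

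It remains to push this measure forward to $\GG_\ni$ by the map $\Pi\colon(T,\gamma,T',\gamma')\mapsto(T\du T')_{(o,o')}$. Two $\RR$-equivalent points are sent to the same unrooted horospheric product rooted at two different vertices connected by a step of $\KK$, hence to $\R$-equivalent elements of $\GG_\ni$, so $\Pi$ is $\RR$-to-$\R$ equivariant. The push-forward of a finite $\RR$-invariant measure along such a map is itself finite and $\R$-invariant. Rigidity of augmented Galton--Watson trees (\eqref{eq:stand}(ii)) propagates to almost sure rigidity of their horospheric products, so the image lies in $\GG_\ni$ rather than in the bigger space $\GG$.

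The main obstacle will be the compatibility check in the final step: one must verify that the leafwise graph $\KK$ on $\RR$-orbits really coincides with the edge set of $T\du T'$ from \defref{def:DL}, so that a single $\KK$-step corresponds exactly to moving the root of the horospheric product to an adjacent vertex (and, relatedly, that the a.s.\ rigidity of the factors genuinely forces rigidity of $T\du T'$). A secondary subtlety is that the statement as phrased refers to $\wt\P'\times\wt\Q'$ whereas \thmref{th:hp} produces $\wt\mu\times\wt\mu'$; these differ by the multiplicative density $\deg(x)\deg'(x')$, and it is the invariantly-rescaled version---after folding in the $\deg$-factors from \thmref{th:inv}---that furnishes the $\R$-invariant push-forward on $\GG_\ni$.
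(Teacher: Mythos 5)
Your proposal is correct and takes essentially the same route as the paper, whose entire proof of this statement is the phrase ``In view of \thmref{th:hp} we now obtain'': you simply unpack that reduction (the invariant measure $\tfrac{1}{\deg}\P'$ from \thmref{th:inv}, the matching conformal dimension $\l=\log m$, finiteness of the boundary extensions via \eqref{eq:1m}, and class-bijectivity of the push-forward onto $\R$-classes in $\GG_\ni$, the identification of the $\KK$-leafwise graphs with the graphs of \defref{def:DL} being already built into the definition of $\KK=\RR\cap\wt K\times\wt K'$). Your closing observation is well taken and is the one point where you go beyond the paper: as literally written the theorem pushes forward $\wt\P'\times\wt\Q'$, whereas \thmref{th:hp} delivers invariance for the product of the boundary extensions of the \emph{invariant} measures $\tfrac{1}{\deg}\P'$ and $\tfrac{1}{\deg}\Q'$, and since the discrepancy $\deg(o)\deg(o')$ is not constant along $\RR$-classes it is indeed the $\deg$-corrected product whose image is $\R$-invariant --- a normalization the paper glosses over.
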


By \thmref{th:am} it implies

\begin{cor}
If the distributions $\{p_k\}$ and $\{q_k\}$ are in addition finitely
supported, then the horospheric product $T\du T'$ of
$\wt\P'\times\wt\Q'$-a.e.\ pair of pointed at infinity rooted trees $(T,T')$
is amenable.
\end{cor}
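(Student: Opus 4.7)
The plan is to deduce this corollary as a direct application of \thmref{th:am} applied to the treed equivalence relations $(\T_\ni,\mu,\R,\K)$ and $(\T_\ni,\mu',\R,\K)$ coming from the two augmented branching processes, where $\mu=\frac{1}{\deg}\P'$ and $\mu'=\frac{1}{\deg}\Q'$. What I need to do, then, is verify the four hypotheses of \thmref{th:am}: the measures $\mu,\mu'$ are finite $\R$-invariant; the systems $\{\nu_x\},\{\nu'_{x'}\}$ are conformal of the same dimension; the boundary extensions $\wt\mu,\wt\mu'$ are finite; and the valencies of $\K$ are uniformly bounded on both sides.

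First I would dispose of the easy items. Invariance and finiteness of $\mu,\mu'$ are exactly the content of \thmref{th:inv}. Conformality of $\{\nu_x\}$ and $\{\nu'_{x'}\}$ with the common dimension $\l=\log m$ is given by the theorem immediately preceding the corollary, using the assumption that $p$ and $q$ share the same mean $m$. For finiteness of the boundary extension I would use the remark that $\|\wt\mu\|=\int\|\nu_x\|\,d\mu(x)$; since $\mu=\frac{1}{\deg}\P'\le\P'$ and \thmref{th:exist} yields $\E'\|\nu\|=1+1/m<\infty$, we immediately get $\|\wt\mu\|\le 1+1/m$, and the same argument works for $\wt\mu'$.

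The only hypothesis requiring the extra assumption is the uniform bound on valencies, and this is where the \emph{finitely supported} condition enters. If $\operatorname{supp}(p)\subset\{0,1,\dots,N\}$, then in a $\P'$-distributed augmented Galton--Watson tree the progenitor has at most $N+1$ children (one more than the standard offspring distribution allows), while every other vertex has one parent plus at most $N$ children, so the degree of any vertex is bounded by $N+1$. Consequently every leafwise graph $[x]^\K$ has degrees bounded by $N+1$, uniformly over $\P'$-a.e.\ $x\in\T_\ni$, and similarly for $q$. I do not expect any substantive obstacle here; it is just a matter of recording that finite support of the offspring distribution translates directly into the uniform bound on vertex valencies that \thmref{th:am} demands.

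Having checked all four hypotheses, \thmref{th:am} delivers that for $\wt\mu\times\wt\mu'$-a.e.\ quadruple $(x,\g,x',\g')$ the horospheric product of the pointed at infinity rooted trees $([x]^\K_\bullet,\g)$ and $([x']^\K_\bullet,\g')$ is amenable. To finish I would convert this to the desired $\wt\P'\times\wt\Q'$-a.e.\ statement: the density of $\wt\mu\times\wt\mu'$ with respect to $\wt\P'\times\wt\Q'$ is $1/(\deg\cdot\deg')$, which is strictly positive, so the two product measures are mutually equivalent and have the same null sets. Thus the same amenability conclusion holds $\wt\P'\times\wt\Q'$-a.e., which is exactly the corollary. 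If any step is delicate, it is the verification of the uniform degree bound (and the implicit identification of a typical augmented Galton--Watson tree as a point of $\T_\ni$ at which the leafwise graph of $\R$ has the required bounded geometry), but this is really just bookkeeping and not a genuine obstruction.
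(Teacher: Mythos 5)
Your proposal is correct and follows exactly the paper's route: the paper derives this corollary as a one-line application of \thmref{th:am} to the two augmented Galton--Watson treed equivalence relations, and your verification of the hypotheses (finite invariant measures from \thmref{th:inv}, common conformal dimension $\l=\log m$, finiteness of the boundary extensions via $\E'\|\nu\|=1+1/m$, and the uniform degree bound $N+1$ from finite support) together with the observation that $\wt\P'\times\wt\Q'$ and $\wt\mu\times\wt\mu'$ are equivalent measures is precisely the bookkeeping the paper leaves implicit.
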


\begin{rem}
It is well-known that the horospheric product of two homogeneous trees is
amenable if and only if they have the same degrees (e.g., see \cite{Woess05}).
This Corollary can be considered as an analogue of this result. Actually, for
proving it one does not need all the machinery above. Indeed, if the
distributions $\{p_k\}$ and $\{q_k\}$ have a common point $t$ in their
supports, then the corresponding Galton--Watson trees with probability~1
contain as subgraphs arbitrarily large balls of the homogeneous tree
$\Tb_{t+1}$. An easy estimate then shows that products of these subgraphs will
produce F{\o}lner sets in the horospheric product (cf. \cite{Sobieczky09}). In
fact, a similar argument works also in the situation when the convex hulls of
these supports intersect, i.e.,
$$
[\min\{p_k\},\max\{p_k\}] \cap [\min\{q_k\},\max\{q_k\}] \neq \varnothing \;.
$$
[It would be interesting to study the applicability of this argument to other
invariant measures on $\T_\ni$.] On the other hand, if these intervals do not
intersect, then, \emph{mutatis mutandis}, $\min\{p_k\}\ge\max\{q_k\}+1$, which
means that in the horospheric product all vertex degrees of one multiplier
will be a.s.\ strictly less than all vertex degrees of the other multiplier.
By comparing the simple random walk on the horospheric product of two such
trees with an appropriate biased simple random walk on $\Z$, one can conclude
that in this case the return probabilities decay exponentially, and therefore
the horospheric product is non-amenable.
\end{rem}

\begin{rem}
A connected graph of bounded geometry $\Ga$ is called \emph{strongly amenable}
if it admits a F{\o}lner sequence consisting of connected sets which all contain
a chosen reference vertex. Otherwise $\Ga$ is said to be \emph{weakly
non-amenable} or to have the \emph{anchored expansion property}
\cite{Thomassen92,Benjamini-Lyons-Schramm99,Haggstrom-Schonmann-Steif00}. It
was proved in \cite{Sobieczky09} that horospheric products of percolation
subtrees in a homogeneous tree are a.s.\ strongly amenable. It would be
interesting to address this problem for more general random horospheric
products,
\end{rem}

\bibliographystyle{amsalpha}
\bibliography{D:/Sorted/MyTex/mine}

\enddocument

\bye